\documentclass[12pt]{article}
\usepackage{amsmath,amsthm,amssymb,enumitem}
\usepackage{graphicx}


\newcommand{\ignore}[1]{}

\setlength{\textwidth}{6.0in}
\setlength{\evensidemargin}{0.25in}
\setlength{\oddsidemargin}{0.25in}
\setlength{\textheight}{9.0in}
\setlength{\topmargin}{-0.5in}
\setlength{\parskip}{0mm}
\setlength{\baselineskip}{1.7\baselineskip}

\newtheorem{theorem}{Theorem}[section]
\newtheorem{lemma}[theorem]{Lemma}
\newtheorem{corollary}[theorem]{Corollary}


\newcommand{\RR}{\ensuremath{\mathbb R}}
\newcommand{\pts}{\mathcal P}
\newcommand{\ptss}{{\mathcal P}^{2*}}
\newcommand{\ptsss}{{\mathcal P}^{4*}}
\newcommand{\vrt}{S}
\newcommand{\vrts}{\mathcal S}

\newcommand{\surfs}{\mathcal S}
\newcommand{\surfsc}{\mathcal{\overline{S}}}

\newcommand{\bi}[1]{\mathcal{B}(#1)}
\newcommand{\bis}{\mathcal B}
\newcommand{\bism}{\mathcal B^*}
\newcommand{\en}{\mathcal E}
\newcommand{\refl}{\textbf{R}}
\newcommand{\eps}{\varepsilon}
\def\cp{{C_\text{part}}}
\def\cells{{C_\text{cells}}}
\def\inter{{C_\text{inter}}}
\def\chol{{C_\text{H\"old}}}
\def\comps{{C_\text{comps}}}
\def\minus{\backslash}

\begin{document}
\pagenumbering{arabic}
\title{Bisector energy and few distinct distances\thanks{Part of this research was performed while the authors were visiting the Institute for Pure and Applied Mathematics (IPAM) in Los Angeles, which is supported by the National Science Foundation. Work on this paper by Frank de Zeeuw was partially supported by Swiss National Science Foundation Grants 200020-144531 and 200021-137574.
Work on this paper by Ben Lund was supported by NSF grant CCF-1350572.}}
\author{
Ben Lund\thanks{
Department of Computer Science, Rutgers University, 110
Frelinghuysen Road, Piscataway, New Jersey 08854-8004.
{\sl lund.ben@gmail.com}}
\and
Adam Sheffer\thanks{%
Department of Mathematics,
California Institute of Technology,
1200 East California Blvd
Pasadena, CA 91125.
{\sl adamsh@caltech.edu}}
\and
Frank de Zeeuw\thanks{
EPFL, Lausanne, Switzerland.
{\sl fdezeeuw@gmail.com}}
}

\maketitle

\begin{abstract}
We introduce the \emph{bisector energy} of an $n$-point set $\pts$ in $\RR^2$, defined as
\[\en(\pts) =
\left|\left\{(a,b,c,d)\in\pts^4 \mid
\text{$a,b$ have the same perpendicular bisector as $c,d$}
\right\}\right|.\]
If no line or circle contains $M(n)$ points of $\pts$, then we prove that for any $\eps>0$
\[\en(\pts) = O\left(M(n)^{\frac{2}{5}}n^{\frac{12}{5}+\eps} + M(n)n^2\right). \]
We also derive the lower bound $\en(\pts)=\Omega(M(n)n^2)$,
which matches our upper bound when $M(n)$ is large.

We use our upper bound on $\en(\pts)$ to obtain two rather different results:\\[2mm]
(i) If $\pts$ determines $O(n/\sqrt{\log n})$ distinct distances, then for any $0<\alpha\le 1/4$, either there exists a line or circle that contains $n^\alpha$ points of $\pts$, or there exist $\Omega(n^{8/5-12\alpha/5-\eps})$ distinct lines that contain $\Omega(\sqrt{\log n})$ points of $\pts$.
This result provides new information on a conjecture of Erd\H os \cite{Erd86} regarding the structure of point sets with few distinct distances.
\\[2mm]
(ii) If no line or circle contains $M(n)$ points of $\pts$,
the number of distinct perpendicular bisectors determined by $\pts$ is
$\Omega\left(\min\left\{M(n)^{-2/5}n^{8/5-\eps},
M(n)^{-1} n^2\right\}\right)$. This appears to be the first higher-dimensional example in a framework for studying the expansion properties of polynomials and rational functions over $\RR$,
initiated by Elekes and R\'onyai \cite{ER00}.
\end{abstract}

\section{Introduction}\label{sec:intro}

Guth and Katz \cite{GK11} proved that every set of $n$ points in $\RR^2$ determines $\Omega(n/ \log n)$ distinct distances. This almost completely settled a conjecture of Erd\H os \cite{Erd46}, who observed that the $\sqrt{n}\times \sqrt{n}$ integer lattice determines $\Theta(n/\sqrt{\log n})$ distances, and conjectured that every set of $n$ points determines at least this number of  distances. Beyond the remaining $\sqrt{\log n}$ gap, this leaves open the question of which point sets determine few distances.
Erd\H os \cite{Erd86} asked whether every set that determines $O(n/\sqrt{\log n})$ distances ``has lattice structure''.
He then wrote: \emph{``The first step would be to decide if there always is a line which contains $cn^{1/2}$ of the points (and in fact $n^{\eps}$ would already be interesting)}.''

Embarrassingly, almost three decades later the bound $n^{\eps}$ seems as distant as it ever was.
The following bound is a consequence of an argument of Szemer\'edi, presented by Erd\H os \cite{Erd75}.

\begin{theorem}[Szemer\'edi]\label{th:Szem}
If a set $\pts$ of $n$ points in $\RR^2$ determines $O(n/\sqrt{\log n})$ distances, then there exists a line containing $\Omega(\sqrt{\log n})$ points of $\pts$.
\end{theorem}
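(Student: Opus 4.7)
The plan is to double-count the quantity $T$ of ordered isoceles triples $(r,p,q)\in\pts^3$ with $p\neq q$ and $|rp|=|rq|$, obtaining a lower bound from the few-distances hypothesis and comparing it to a straightforward upper bound that fails unless some line contains many points of $\pts$.

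For the lower bound, I would fix $r\in\pts$ and partition the remaining $n-1$ points by their distance from $r$. If these distances take $D_r$ distinct values with multiplicities $m_1,\ldots,m_{D_r}$, then $\sum_i m_i = n-1$ and Cauchy--Schwarz yields $\sum_i m_i^2 \ge (n-1)^2/D_r$, so $r$ contributes at least $(n-1)^2/D_r - (n-1)$ triples to $T$. Writing $D=O(n/\sqrt{\log n})$ for the total number of distinct distances, the obvious bound $\sum_r D_r\le nD$ combined with AM--HM gives $\sum_r 1/D_r \ge n^2/\sum_r D_r \ge n/D$. Summing the per-point estimate over $r$ therefore yields
\[T \;\ge\; (n-1)^2\cdot \frac{n}{D} - n(n-1) \;=\; \Omega(n^2\sqrt{\log n}).\]

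For the upper bound, the condition $|rp|=|rq|$ with $p\neq q$ is exactly the statement that $r$ lies on the perpendicular bisector $\ell_{pq}$ of $p$ and $q$. Summing over unordered pairs,
\[T \;=\; 2\sum_{\{p,q\}\subset \pts}\bigl|\pts\cap \ell_{pq}\bigr|.\]
If every line of $\RR^2$ contained at most $K$ points of $\pts$, the right-hand side would be at most $2K\binom{n}{2}\le Kn^2$, and comparing with the lower bound would force $K=\Omega(\sqrt{\log n})$. In particular some bisector $\ell_{pq}$, which is a line of $\RR^2$, contains $\Omega(\sqrt{\log n})$ points of $\pts$, as desired.

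The only non-bookkeeping step is the pair of Cauchy--Schwarz / AM--HM applications that turn the $O(n/\sqrt{\log n})$ hypothesis into the lower bound $T=\Omega(n^3/D)$; that is where I expect any care to be needed. Notably no incidence theorem is used, and Szemer\'edi--Trotter would in fact be wasteful here, since it would ignore the structural constraint that the $\ell_{pq}$ form a restricted two-parameter family of lines---an observation that presumably motivates the finer ``bisector energy'' $\en(\pts)$ studied in the rest of the paper.
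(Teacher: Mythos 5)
Your proof is correct and is essentially the standard Szemer\'edi double-count of isoceles triples, which is exactly the argument the paper alludes to (and reproduces in the proof of Theorem \ref{th:fewdistances2}): a per-point Cauchy--Schwarz lower bound of $\Omega(n^3/D)$ on the number of triples $(r,p,q)$ with $|rp|=|rq|$, matched against the observation that each such triple places $r$ on the bisector of $p,q$. The only cosmetic difference is your AM--HM step; the paper simply uses $D_r\le D$ for every $r$, which gives the same bound directly.
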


Recently, it was noticed that this bound can be slightly improved to $\Omega(\log n)$ points on a line (see \cite{Sh14b}).
Assuming that no line contains an asymptotically larger number of points, one can prove the existence of $\Omega(n/\log n)$ distinct lines that contain $\Omega(\log n)$ points of $\pts$.
By inspecting Szemer\'edi's proof, it is also apparent that these lines are \emph{perpendicular bisectors} of pairs of points of $\pts$.

This problem was recently approached from the other direction in \cite{PZ13,RRS14,SZZ14}. Combining the results of these three papers implies the following.
If an $n$-point set $\pts\subset\RR^2$ determines $o(n)$ distances, then no line contains $\Omega(n^{43/52+\eps})$ points of $\pts$,
no circle contains $\Omega(n^{5/6})$ points, and no other constant-degree irreducible algebraic curve contains $\Omega(n^{3/4})$ points.

In the current paper we study a different aspect of sets with few distinct distances. Our main tool is a bound on the \emph{bisector energy} of the point set (see below for a formal definition).
Using this tool, we prove that if a point set $\pts$ determines $O(n/\sqrt{\log n})$ distinct distances, then either there exists a line or a circle with many points of $\pts$, or the number of lines containing $\Omega(\sqrt{\log n})$ must be significantly larger than implied by Theorem \ref{th:Szem}.
As another application of bisector energy, we prove that if no line or circle contains many points of a point set $\pts$, then $\pts$ determines a large number of distinct perpendicular bisectors.
We will provide more background to both results after we have properly stated them.

\section{Results} \label{sec:results}

\paragraph{Bisector energy.}
Given two distinct points $a,b\in \RR^2$, we denote by $\bi{a,b}$ their \emph{perpendicular bisector} (i.e., the line consisting of all points that are equidistant from $a$ and $b$); for brevity, we usually refer to it as the \emph{bisector} of $a$ and $b$.
We define the \emph{bisector energy} of $\pts$ as
\[\en(\pts) =
\left|\left\{(a,b,c,d)\in\pts^4 \mid
\ a\neq b, \ c\neq d,\ \text{and}~ \
\bi{a,b} = \bi{c,d}\right\}\right|.\]
In Section \ref{sec:proofofmain}, we prove the following upper bound on this quantity.

\begin{theorem}\label{th:energy}
For any $n$-point set $\pts\subset \RR^2$, such that no line or circle contains $M(n)$ points of $\pts$, we
have\footnote{Throughout this paper, when we state a bound involving an $\eps$, we mean that this bound holds for every $\eps>0$, with the multiplicative constant of the $O()$-notation depending on $\eps$.}
\[\en(\pts) = O\left(M(n)^{\frac{2}{5}}n^{\frac{12}{5}+\eps} + M(n)n^2\right).\]
\end{theorem}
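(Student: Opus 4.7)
My plan is to start from the reformulation
$\en(\pts)=\sum_\ell N(\ell)^2$,
where $N(\ell)=|\{a\in\pts:R_\ell(a)\in\pts,\ R_\ell(a)\neq a\}|$ counts ordered pairs in $\pts$ symmetric across the line $\ell$ (here $R_\ell$ denotes reflection in $\ell$). Each contributing 4-tuple $(a,b,c,d)$ is a pair of $\pts$-pairs exchanged by the reflection across $\ell=\bi{a,b}=\bi{c,d}$. Geometrically $\{a,b,c,d\}$ is either collinear (on a single line perpendicular to $\ell$) or consists of four distinct points forming a non-degenerate isosceles trapezoid whose axis of symmetry is $\ell$; in the latter case the four points are concyclic. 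This suggests splitting $\en(\pts)=\en_{\mathrm{col}}+\en_{\mathrm{non}}$, with the two parts expected to produce the two terms of the theorem.

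For $\en_{\mathrm{col}}$: if $a,b,c,d$ all lie on a line $L$, then $\bi{a,b}=\bi{c,d}$ is equivalent to the additive condition $a+b=c+d$ measured along $L$, so the contribution from $L$ is the additive 4-energy of $\pts\cap L\subset\RR$, bounded by $|\pts\cap L|^3\leq M(n)\cdot|\pts\cap L|^2$. Since every pair of distinct points determines a unique line, $\sum_L|\pts\cap L|^2=O(n^2)$, and therefore $\en_{\mathrm{col}}=O(M(n)\,n^2)$, matching the second term of the theorem.

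For $\en_{\mathrm{non}}$ the plan is an incidence-geometric argument. I would encode each ordered pair $(a,b)\in\pts^2$ with $a\neq b$ as the point $p_{ab}=\bigl((a+b)/2,\,\theta_{ab}\bigr)\in\RR^3$, where $\theta_{ab}\in[0,\pi)$ is the direction of $b-a$. Two pairs have the same perpendicular bisector iff their encodings share the $\theta$-coordinate \emph{and} their midpoints lie on a common line perpendicular to $u_{\theta}$ — equivalently iff the two encoding-points are collinear on a line from a specific 2-parameter family of lines in $\RR^3$ (within each slice $\theta=\mathrm{const}$ the relevant lines are parallel to the fixed direction $u_\theta^{\perp}$). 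Thus $\en_{\mathrm{non}}$ is a sum of squared multiplicities of lines in this family on the $n(n-1)$ encoding points, and the plan is to bound that sum by a dyadic decomposition over the multiplicity $k$ combined with a Pach--Sharir/Sharir--Zahl-type incidence bound that estimates the number of lines with multiplicity $\geq k$. The hypothesis ``no circle contains $M(n)$ points'' enters here because every non-degenerate 4-tuple is concyclic, so bisectors with large $N(\ell)$ must draw their richness from correspondingly rich circles, and the circle-cap $M(n)$ converts directly into an incidence constraint.

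The main obstacle is producing the precise exponents $12/5$ and $2/5$. A direct Guth--Katz bound on the distance energy $\sum_r N_r^2$ would only give $\en(\pts)=O(n^3\log n)$ and it ignores the circle hypothesis, so the needed improvement has to come from the stronger rigidity of the common-bisector condition — which enforces $|ac|=|bd|$ \emph{and} $|ad|=|bc|$, forcing full concyclicity of the four points, not just an isolated equal-distance relation — together with its translation into an incidence bound for three-parameter curves (whose characteristic exponents are $3/5,4/5$, the source of the $12/5$ and $2/5$ in the final bound). Balancing the incidence estimate against the dyadic multiplicity thresholds, and verifying that the ``no line / no circle of $M(n)$ points'' hypothesis correctly controls the heavy cells at every scale, is the most delicate bookkeeping in the proof.
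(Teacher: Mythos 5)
Your treatment of the collinear quadruples is fine (the additive-energy bound $|\pts\cap L|^3$ summed against $\sum_L|\pts\cap L|^2=O(n^2)$ does give $O(M(n)n^2)$), and the reformulation $\en(\pts)=\sum_\ell N(\ell)^2$ is correct. But the plan for the main term has a genuine gap, and the specific mechanism you propose for using the hypotheses does not work. First, the $\RR^3$ encoding collapses: within each slice $\theta=\mathrm{const}$, the lines carrying the encoding points of a common bisector are all parallel (each perpendicular to $u_\theta$), so each encoding point lies on exactly one such line and there is no nontrivial incidence statement to amplify; the reduction gives only $\sum_\ell N(\ell)^2\le \max_\ell N(\ell)\cdot n(n-1)$. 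Second, the claim that a rich bisector forces a rich circle is false: any \emph{two} of the $N(\ell)$ pairs symmetric across $\ell$ are indeed concyclic, but on a circle whose center (on $\ell$) depends on the two pairs, so the $\binom{N(\ell)}{2}$ circles are generically all distinct and the hypothesis puts no cap on $N(\ell)$. Indeed, a generic set union its mirror image has $N(\ell)=\Theta(n)$ while no line or circle contains more than two points, so $\max_\ell N(\ell)$ cannot be controlled this way and your dyadic scheme has no input.

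The paper's route is different in exactly the place you leave open. It encodes the problem as a point--surface incidence problem in $\RR^4$: the points are the pairs $(b,d)\in\ptss$ and the ``surfaces'' are $S_{ac}=\{(b,d):\bi{a,b}=\bi{c,d}\}$, each contained in a constant-degree two-dimensional variety. The line/circle hypothesis enters not by bounding the richness of a single bisector but by forbidding $K_{2,M}$ and $K_{M,2}$ in the incidence graph: the key lemma shows that $S_{ac}\cap S_{a'c'}$ is contained in $C_1\times C_2$ for two concentric circles or two parallel lines $C_1,C_2$ through $\{a,a'\}$ and $\{c,c'\}$ respectively, so $M$ common points would force $M$ points of $\pts$ on $C_1$. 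The exponents $2/5$ and $12/5$ then come from a semi-algebraic Zarankiewicz-type incidence bound applied on the three-dimensional hypersurfaces $H_\delta=\{(b,d):|bd|=\delta\}$ (using that $\bi{a,b}=\bi{c,d}$ forces $|ac|=|bd|$, so the problem splits by distance), combined with the Guth--Katz distance-energy estimate $\sum_i m_i^2=O(n^3\log n)$ and a dyadic summation over the distance classes. These three ingredients --- the surface formulation, the intersection lemma converting the hypothesis into $K_{2,M}$-freeness, and the splitting by the common distance --- are absent from your proposal, and without them the stated exponents are not reachable along the path you describe.
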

The bound of Theorem \ref{th:energy} is dominated by its first term
when $M(n)=O(n^{2/3+\eps'})$.
We note that one important ingredient of our proof is the result of Guth and Katz \cite{GK11}; without it, we would obtain a weaker (although nontrivial) bound on the bisector energy (see the remark at the end of Section \ref{sec:applyingbound}).

In parallel to our work, Hanson, Iosevich, Lund, and Roche-Newton \cite{HILR14} derived a variant of Theorem \ref{th:energy} for the case of point sets in $\mathbb{F}_q^2$.

In Section \ref{ssec:lowerbound}, we derive a lower bound for the maximum bisector energy. It shows that Theorem \ref{th:energy} is tight when its second term dominates, i.e., when $M(n)=\Omega(n^{2/3+\eps'})$.

\begin{theorem}\label{th:lowerbound}
For any $n$ and $M(n)$, there exists a set $\pts$ of $n$ points in $\mathbb{R}^2$ such that no line or circle contains $M(n)$ points of $\pts$, and
$\en(\pts) = \Omega\left(M(n)n^2 \right)$.
\end{theorem}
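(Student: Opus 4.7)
My plan is to construct an explicit grid-like set $\pts$ whose bisector energy is $\Omega(Mn^2)$, focused on the informative regime $M \ge \sqrt{n}$ in which this bound will match the upper bound of Theorem \ref{th:energy}.

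Take $L = \lfloor n/M \rfloor$, fix distinct reals $y_1, \ldots, y_L$, and let
\[
\pts := \bigl\{(i, y_j) : 1 \le i \le M,\; 1 \le j \le L\bigr\}.
\]
Minor adjustments by $\pm 1$ in the counts (e.g.\ taking $M-1$ points per row) make the line and circle inequalities strict and $|\pts|$ exactly $n$.

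Next I would check the line and circle conditions. Horizontal lines contain at most $M$ points and vertical lines contain at most $L$ points, while any other line meets each horizontal row in at most one point and hence carries at most $L$ points. So no line contains more than $\max(M, L) = M$ points whenever $M \ge \sqrt{n}$. Every circle meets each row in at most two points, yielding the crude bound of $2L = 2n/M$ on cocircular points; this is below $M$ as soon as $M \ge 2\sqrt{n}$. In the intermediate range $\sqrt{n} \le M < 2\sqrt{n}$ I would either invoke a Schinzel-type sub-polynomial bound on cocircular subsets of rectangular lattices, or select the $y_j$ to be algebraically generic to forbid triples from distinct rows from being cocircular.

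Finally I lower-bound $\en(\pts)$ using only quadruples whose two constituent pairs each lie on a common row. If $(a,b)$ consists of two points on the row $y = y_j$ with $x$-coordinates $p \ne q$, then $\bi{a,b}$ is the vertical line $x = (p+q)/2$, independent of $j$. Setting
\[
r(s) := \bigl|\{(p,q) \in \{1,\ldots,M\}^2 : p + q = s,\ p \ne q\}\bigr|,
\]
the number of ordered pairs in $\pts^2$ whose bisector is the line $x = s/2$ is exactly $L \cdot r(s)$, and so
\[
\en(\pts) \;\ge\; \sum_s \bigl(L \cdot r(s)\bigr)^2 \;=\; L^2 \sum_s r(s)^2.
\]
Since $\sum_s r(s) = M(M-1)$ is supported on at most $2M-1$ values of $s$, Cauchy--Schwarz gives $\sum_s r(s)^2 = \Omega(M^3)$, and therefore
\[
\en(\pts) \;=\; \Omega\!\left(L^2 M^3\right) \;=\; \Omega\!\left((n/M)^2 \cdot M^3\right) \;=\; \Omega(Mn^2),
\]
as required.

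I expect the main obstacle to be the verification of the circle condition at the boundary $M \approx \sqrt{n}$, where the crude two-per-line bound $2L$ fails to beat $M$. A small generic perturbation of the $y_j$ handles this without affecting the bisector-energy computation, since that computation depends only on the shared arithmetic-progression structure along each row.
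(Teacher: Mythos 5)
Your bisector-energy computation is correct and your construction is genuinely different from the paper's, but as written it only establishes the theorem in the regime $M(n)\gtrsim\sqrt{n}$, whereas the statement is claimed for all $M(n)$. The obstruction is exactly the one you flag for lines: any non-horizontal line meets each row in at most one point, but a vertical line through a column picks up all $L=n/M$ rows, so once $M=o(\sqrt{n})$ your set contains lines with far more than $M$ points and the hypothesis of the theorem fails. This is not a vacuous regime: for, say, $M=n^{1/4}$ the claimed bound $\en(\pts)=\Omega(n^{9/4})$ is strictly stronger than the trivial $\Omega(n^2)$ coming from the diagonal quadruples $(a,b,a,b)$, so something must be constructed there. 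The missing idea is to replace each row by a cluster that a line or circle can meet in only $O(1)$ points while the cluster still carries a rich reflection structure. The paper does this by placing $\Theta(n/M)$ points on each of $\Theta(M)$ horizontal translates of a fixed ellipse, each cluster symmetric about its own vertical axis; then every line meets $\pts$ in at most $M/4$ points and every circle in at most $M/2$ points, for \emph{every} $M$ (down to a constant, below which the diagonal already gives $\en(\pts)=\Omega(n^2)=\Omega(Mn^2)$), while $\Theta(M)$ vertical lines each serve as the bisector of $\Theta(n)$ pairs, yielding $\en(\pts)=\Omega(Mn^2)$. Your construction is essentially the degenerate case in which the conic carrying each cluster is a line, which is precisely why the collinearity condition breaks for small $M$; your extra gain from the arithmetic-progression structure within a row (the $\sum_s r(s)^2=\Omega(M^3)$ step) is not needed once one has $\Theta(M)$ bisectors each of multiplicity $\Theta(n)$.

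Two smaller points. First, the circle condition: ``forbid triples from distinct rows from being cocircular'' cannot be the right genericity condition, since any three non-collinear points are cocircular. What you want is that no \emph{four} points spanning at least three distinct rows are concyclic; this is a nontrivial polynomial condition on the relevant $y_j$, so choosing the $y_j$ algebraically independent ensures every circle meets $\pts$ in at most two rows and hence in at most $4$ points, which disposes of circles for all $M\ge 5$ with no need for Schinzel-type lattice bounds. Second, even with that fix the line problem above remains, so the repaired argument still needs the ellipse (or some equivalent) device to cover $M(n)=o(\sqrt{n})$.
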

We conjecture that $\en(\pts) = O(M(n)n^2)$  is true for all $M(n)$.

\paragraph{Few distinct distances.}
Our first application of Theorem \ref{th:energy} is to deduce the following theorem.
It follows from the slightly more general Theorem \ref{th:fewdistances2} that we prove in Section \ref{sec:fewdistances}.
\begin{theorem} \label{th:fewdistances}
Let $\pts\subset \RR^2$ be a set of $n$ points that spans $O(n/\sqrt{\log n})$ distinct distances.
Then for any $0<\alpha\le 1/4$, at least one of the following holds.\\
(i) There exists a line or a circle containing $\Omega(n^\alpha)$ points of $\pts$. \\
(ii) There are
$\Omega(n^{\frac{8}{5}-\frac{12\alpha}{5}-\eps})$
lines that contain $\Omega(\sqrt{\log n})$ points of $\pts$.
\end{theorem}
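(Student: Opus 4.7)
The plan is to combine three ingredients: a classical Cauchy--Schwarz count of isosceles triangles in a few-distance set, Szemer\'edi's ``rich bisector'' dichotomy, and Theorem \ref{th:energy}. First, I show that an $n$-point set $\pts$ spanning $D = O(n/\sqrt{\log n})$ distinct distances determines $\Omega(n^2\sqrt{\log n})$ ordered \emph{isosceles triangles}, i.e.\ triples $(p,a,b)\in \pts^3$ with $a\neq b$ and $|pa|=|pb|$. At each $p\in\pts$, if $p$ sees $D_p\leq D$ distinct distances with multiplicities $n_{i,p}$, Cauchy--Schwarz yields $\sum_i n_{i,p}(n_{i,p}-1) \geq (n-1)^2/D_p - (n-1) = \Omega(n\sqrt{\log n})$, and summing over $p$ gives the claimed bound.

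Next, I re-parametrise by bisector. For each line $\ell \subset \RR^2$ set $r(\ell) = |\pts\cap \ell|$ and $K(\ell) = |\{(a,b)\in \pts^2 : a\neq b,\ \bi{a,b}=\ell\}|$. Each isosceles triangle is an incidence $p\in \bi{a,b}$, so the triangle count equals $\sum_\ell r(\ell)K(\ell)$, while $\en(\pts) = \sum_\ell K(\ell)^2$ by definition. Set $m = c\sqrt{\log n}$ for a small constant $c>0$ and call $\ell$ \emph{rich} if $r(\ell)\geq m$. The contribution of non-rich bisectors to the triangle count is at most $m\cdot n(n-1) = o(n^2\sqrt{\log n})$, so
\[
\sum_{\ell\text{ rich}} r(\ell)K(\ell) \;=\; \Omega\!\left(n^2\sqrt{\log n}\right).
\]
Assuming case (i) fails, no line or circle contains $\Omega(n^\alpha)$ points, so we may apply Theorem \ref{th:energy} with $M(n) = O(n^\alpha)$; since $\alpha \leq 1/4$, the first term dominates and $\en(\pts) = O(n^{12/5 + 2\alpha/5 + \eps})$. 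Cauchy--Schwarz gives
\[
\left(\sum_{\ell\text{ rich}} r(\ell)K(\ell)\right)^{\!2} \;\leq\; \left(\sum_{\ell\text{ rich}} r(\ell)^2\right)\!\left(\sum_\ell K(\ell)^2\right),
\]
hence $\sum_{\ell\text{ rich}} r(\ell)^2 = \Omega(n^{8/5 - 2\alpha/5 - \eps})$. As $r(\ell)\leq n^\alpha$ everywhere, the number of rich bisectors is at least $\Omega(n^{8/5-2\alpha/5-\eps}/n^{2\alpha}) = \Omega(n^{8/5 - 12\alpha/5 - \eps})$. Each is a distinct line carrying $\Omega(\sqrt{\log n})$ points of $\pts$, giving case (ii).

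The main substantive obstacle has been absorbed into Theorem \ref{th:energy}; what remains is a careful Szemer\'edi-style double count fused with a Cauchy--Schwarz step. The restriction $\alpha \leq 1/4$ is precisely what keeps the first term in the bisector-energy bound dominant, and the freedom to pick the constant $c$ small ensures that the rich bisectors carry a positive fraction of the isosceles triangles. The slight strengthening promised by Theorem \ref{th:fewdistances2} should correspond to absorbing logarithmic factors or allowing the $O(n/\sqrt{\log n})$ distance hypothesis to be relaxed to any bound guaranteeing $\Omega(n^2\sqrt{\log n})$ isosceles triangles.
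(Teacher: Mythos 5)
Your proof is correct and follows essentially the same route as the paper's: a Cauchy--Schwarz count of isosceles triples, discarding bisectors incident to fewer than $c\sqrt{\log n}$ points via $\sum_\ell K(\ell)=n(n-1)$, and a second Cauchy--Schwarz combining the triangle count with the bisector energy bound and the trivial bound $r(\ell)\le n^{\alpha}$. The only (inconsequential) quibble is your closing remark that $\alpha\le 1/4$ is what keeps the first term of Theorem \ref{th:energy} dominant --- that holds for all $\alpha<2/3$; the threshold $1/4$ is where the resulting line count stops improving on Szemer\'edi's bound.
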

If our conjecture that $\en(\pts)=O(M(n)n^2)$ were proved, alternative $(ii)$ in the conclusion of Theorem \ref{th:fewdistances} would improve to give $\Omega(n^{2 - 3\alpha}\log(n))$ lines that each contain $\Omega(\sqrt{\log n})$ points of $\pts$.

We believe that Theorem \ref{th:fewdistances} is a step towards Erd\H os's lattice conjecture.
We mention several recent results and conjectures that together paint an interesting picture.

Green and Tao \cite{GT13} proved that, given an $n$-point set in $\RR^2$ such that more than $n^2/6-O(n)$ lines contain at least three of the points, most of the points must lie on a cubic curve (an algebraic curve of degree at most three).
Elekes and Szab\'o \cite{ES13} stated the stronger conjecture that if an $n$-point set determines $\Omega(n^2)$ collinear triples, then many of the points lie on a cubic curve; 
unfortunately, at this point it is not even known whether there must be a cubic that contains \emph{ten} points of the set.
Erd\H os and Purdy \cite{EP76} conjectured that if $n$ points determine $\Omega(n^2)$ collinear \emph{quadruples}, then there must be five points on a line.
If the point set is already known to lie on a low-degree algebraic curve,
then both conjectures hold \cite{ES13, RSZ14}. 
On the other hand, Solymosi and Stojakovi\'c \cite{SoSt13} proved that for any constant $k$, there can be $\Omega(n^{2-\eps})$ lines with exactly $k$ points, but no line with $k+1$ points.

The philosophy of these statements is that if there are many lines containing many points, then the points must lie on some low-degree algebraic curve.
Our result shows that for an $n$-point set with few distinct distances,
 either there is a line or circle with very many points,
or else there are many lines with many points.
In particular, in the second case there would be many collinear triples (although not quite as many as $\Omega(n^2)$),
and many lines with very many (more than a constant) points.
This suggests that few distinct distances should imply some algebraic structure.
Let us pose a specific question: Is there a $0<\beta<1$ such that if $n$ points determine $\Omega(n^{1+\beta})$ lines with $\Omega(\sqrt{\log n})$ points, then many of the points must lie on a low-degree algebraic curve?

\paragraph{Distinct bisectors.}
Let $\bis(\pts)$ be the set of those lines that are distinct perpendicular bisectors of $\pts$.
Since any point of $\pts$ determines $n-1$ distinct bisectors with the other points of $\pts$, we have a trivial lower bound $|\bis(\pts)|\ge n-1$.
If $\pts$ is a set of equally spaced points on a circle, then $|\bis(\pts)|= n$. Similarly, if $\pts$ is a set of $n$ equally spaced points on a line, then $|\bis(\pts)|= 2n-3$.
As we now show, forbidding many points on a line or circle forces $|\bis(\pts)|$ to be significantly larger.

\begin{theorem}\label{th:distinctbisectors}
If an $n$-point set $\pts\subset \RR^2$ has no $M(n)$ points on a line or circle, then
\[|\bis(\pts)| =
\Omega\left(\min\left\{M(n)^{-\frac{2}{5}}n^{\frac{8}{5}-\eps},
M(n)^{-1} n^2\right\}\right).\]
\end{theorem}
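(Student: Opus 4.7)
The plan is to deduce Theorem \ref{th:distinctbisectors} from Theorem \ref{th:energy} by a standard Cauchy--Schwarz argument that relates the number of distinct bisectors to the bisector energy.

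For each $\ell \in \bis(\pts)$, let $r(\ell)$ denote the number of ordered pairs $(a,b) \in \pts^2$ with $a\neq b$ and $\bi{a,b}=\ell$. Since every ordered pair $(a,b)$ of distinct points determines exactly one perpendicular bisector, I have
\[
\sum_{\ell \in \bis(\pts)} r(\ell) = n(n-1),
\]
and by the definition of bisector energy,
\[
\en(\pts) = \sum_{\ell \in \bis(\pts)} r(\ell)^2.
\]
Applying the Cauchy--Schwarz inequality to the first identity,
\[
n^2(n-1)^2 = \Bigl(\sum_{\ell} r(\ell)\Bigr)^2 \le |\bis(\pts)| \cdot \sum_{\ell} r(\ell)^2 = |\bis(\pts)| \cdot \en(\pts),
\]
so $|\bis(\pts)| = \Omega(n^4/\en(\pts))$.

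Now I plug in Theorem \ref{th:energy}: since no line or circle contains $M(n)$ points of $\pts$,
\[
\en(\pts) = O\bigl(M(n)^{2/5}\,n^{12/5+\eps} + M(n)\,n^2\bigr).
\]
Using the elementary estimate $\tfrac{1}{A+B} \ge \tfrac{1}{2}\min(\tfrac{1}{A},\tfrac{1}{B})$ to split the two summands, I obtain
\[
|\bis(\pts)| = \Omega\!\left(\min\!\left\{\frac{n^4}{M(n)^{2/5}\,n^{12/5+\eps}},\ \frac{n^4}{M(n)\,n^2}\right\}\right) = \Omega\!\left(\min\!\left\{M(n)^{-2/5}\,n^{8/5-\eps},\ M(n)^{-1}\,n^2\right\}\right),
\]
which is the claimed bound.

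There is no real obstacle here: once the bisector energy bound of Theorem \ref{th:energy} is in hand, the deduction is a one-line Cauchy--Schwarz. The entire difficulty of the theorem is absorbed into Theorem \ref{th:energy}, whose proof is the substantial part of the paper. The only bookkeeping point is to track that the two regimes in the min correspond to the two terms of the energy bound, which also explains why the threshold between the two cases sits at $M(n) = \Theta(n^{2/3+\eps'})$, matching the regime where the first term of Theorem \ref{th:energy} dominates.
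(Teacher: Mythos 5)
Your proposal is correct and follows exactly the paper's own argument: define the multiplicity of each bisector, note that the multiplicities sum to $n(n-1)$ while their squares sum to $\en(\pts)$, apply Cauchy--Schwarz to get $|\bis(\pts)| = \Omega(n^4/\en(\pts))$, and invoke Theorem \ref{th:energy}. No differences worth noting.
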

\begin{proof} For any line $\ell\subset \RR^2$, set $E_\ell = \{(a,b)\in \pts^2\mid a\neq b, ~\bi{a,b} = \ell\}$. By the Cauchy-Schwarz inequality, we have
\[|\en(\pts)|  = \sum_{l\in \bis(\pts)} |E_l|^2 \geq \frac{1}{|\bis(\pts)|}\left(\sum_{l\in \bis(\pts)} |E_l|\right)^2 = \Omega\left(\frac{n^4}{|\bis(\pts)|}\right)\]
Combining this with the bound of Theorem \ref{th:energy} immediately implies the theorem.
\end{proof}

We are not aware of any previous results concerning the minimum number of distinct bisectors.

Theorem \ref{th:distinctbisectors} can be viewed as a new step in a series of results initiated by Elekes and R\'onyai \cite{ER00}, studying the expansion properties of polynomials and rational functions over $\RR$. For instance, in \cite{RSS14} it is proved that a polynomial function $F:\RR\times \RR\to \RR$ takes $\Omega(n^{4/3})$ values on any set in $\RR$ of size $n$, unless $F(x,y) = G(H(x)+K(y))$ or $F(x,y) = G(H(x)K(y))$ with polynomials $G,H,K$.
Elekes and Szab\'o \cite{ES12} derived the following higher-dimensional generalization (rephrased for our convenience, and omitting some details).
If $F:\RR^D\times \RR^D\to \RR^D$ is a rational function that is not of a special form, and $A,B \subset \RR^D$ are two $n$-point sets such that no low-degree proper subvariety of $\RR^D$ contains many points of $A$ or $B$, then $F$ takes $\Omega(n^{1+\eps})$ values on $A\times B$.
However, this last condition is hard to use, and no concrete case with $D>1$ is known.

Theorem \ref{th:distinctbisectors}
is such a concrete case, for the function $\bis$.
If we view a line $y=sx+t$ as a point $(s,t)\in \RR^2$, then (see the proof of Lemma \ref{lem:surfaces})
\[\bis(a_x,a_y,b_x,b_y)
= \left(-\frac{a_x-b_x}{a_y-b_y}, \frac{(a_x^2+a_y^2) - (b_x^2+b_y^2)}{2(a_y-b_y)}\right)\]
is a rational function $\RR^2\times \RR^2\to \RR^2$.
Then Theorem \ref{th:distinctbisectors} says that $\bis$ takes many distinct values on any $n$-point set with few points on a line or circle.
So we have replaced the broad condition of \cite{ES12} that not too many points lie on a low-degree curve, with the very specific condition that not too many points lie on a line or circle.
Moreover, the exponent in our lower bound is considerably better than that of \cite{ES12}.

\paragraph{An incidence bound.}
To prove Theorem \ref{th:energy}, we use the incidence bound below.
It is a refined version of a theorem from Fox et al.~\cite{FPSSZ14}, with explicit dependence on the parameter $t$, which we allow to depend on $m$ and $n$.
We reproduce the proof in Section \ref{sec:incidencebound} to determine this dependence.
Given a set $\pts \subset \RR^d$ of points and a set $\vrts \subset \RR^d$ of varieties, the \emph{incidence graph} is a bipartite graph with vertex sets $\pts$ and $\vrts$, such that $(p,\vrt)\in \pts \times \vrts$ is an edge in the graph if the point $p$ is incident to the variety $\vrt$.
We write $I(\pts,\vrts)$ for the number of edges of this graph, or in other words, for the number of \emph{incidences} between $\pts$ and $\vrts$.
We denote the complete bipartite graph on $s$ and $t$ vertices by $K_{s,t}$.

\begin{theorem}\label{th:incidencebound}
Let $\vrts$ be a set of $n$ constant-degree varieties and let $\pts$ be a set of $m$ points, both in $\RR^d$, such that the incidence graph of $\pts\times\vrts$ contains no copy of $K_{s,t}$ (where $s$ is a constant, but $t$ may depend on $m,n$).
Moreover, let $\pts\subset V$, where $V$ is an irreducible constant-degree variety of dimension $e$.
Then
\[I(\pts,\vrts) = O\left(m^{\frac{s(e-1)}{es-1}+\eps}n^{\frac{e(s-1)}{es-1}}t^{\frac{e-1}{es-1}} +tm+n\right).\]
\end{theorem}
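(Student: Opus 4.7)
The plan is to apply polynomial partitioning on the variety $V$, invoke the K\H{o}v\'ari--S\'os--Tur\'an (KST) inequality with explicit $t$-dependence inside each cell, and induct on the dimension $e$ to handle the points that land on the partitioning hypersurface. This follows the overall strategy of Fox et al.~\cite{FPSSZ14}, the novel point being to treat $t$ as a variable rather than a constant throughout.

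For a parameter $r$ to be chosen later, the first step is to apply a polynomial partitioning theorem on $V$: there is a polynomial $g$ of degree $O(r)$ not vanishing identically on $V$ such that $V\setminus Z(g)$ has $O(r^e)$ open connected components (``cells''), each containing $O(m/r^e)$ points of $\pts$. A B\'ezout-type bound shows that every $S\in\vrts$ not contained in $Z(g)$ meets at most $O(r^{e-1})$ cells, so $\sum_i n_i = O(nr^{e-1})$, where $n_i$ is the number of varieties meeting cell $\Omega_i$. Inside each cell the incidence graph is still $K_{s,t}$-free, so KST yields
\[I(\pts_i,\vrts_i) = O\bigl(t^{1/s}\, m_i\, n_i^{\,1-1/s} + n_i\bigr).\]
Summing with H\"older's inequality, using $m_i=O(m/r^e)$ and $\sum_i n_i = O(nr^{e-1})$, produces a cell contribution of $O\bigl(t^{1/s}\, m\, n^{(s-1)/s}\, r^{-(s-1)/s} + nr^{e-1}\bigr)$. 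Balancing the two terms at $r\sim t^{1/(es-1)} m^{s/(es-1)} n^{-1/(es-1)}$ recovers the main term $m^{s(e-1)/(es-1)} n^{e(s-1)/(es-1)} t^{(e-1)/(es-1)}$ of the claimed bound.

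It remains to handle the incidences contributed by the points of $\pts$ lying on $W := Z(g)\cap V$, a variety of dimension at most $e-1$ and degree $O(r)$. I would decompose $W$ into its top-dimensional irreducible components and apply the theorem inductively on $e$, with the base case $e=1$ handled directly: each $S\in\vrts$ not containing the curve $V$ contributes $O(1)$ incidences (accounting for the $+n$ term), while the $K_{s,t}$-free hypothesis bounds the number of varieties containing $V$ by $t-1$ (accounting for the $+tm$ term). The $+\eps$ in the exponent absorbs the usual slack from iterating the partitioning argument. The main obstacle is the induction step: the components of $W$ may have degree as large as $O(r)$ rather than constant, so I must verify that the inductive bound is uniform in the degree of the ambient variety, and that summing the inductive bounds over all the irreducible components of $W$ does not introduce additional factors of $r$ that would spoil the optimization. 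By contrast, propagating the $t$-dependence through the induction is comparatively clean, since every level contributes the same $t^{1/s}$ factor that recombines into the final exponent $(e-1)/(es-1)$.
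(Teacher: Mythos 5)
Your overall architecture (partition on $V$, K\H{o}v\'ari--S\'os--Tur\'an in the cells, induct on dimension for the points on the partitioning hypersurface) is the right family of arguments, but the specific route you choose has a gap that you correctly flag and do not close, and it is not a minor verification: it is the reason the paper's proof is structured differently. You take $r$ polynomially large in $m,n,t$ (namely $r\sim t^{1/(es-1)}m^{s/(es-1)}n^{-1/(es-1)}$), so that a single partitioning step already produces the main term. The price is that $W=Z(g)\cap V$ then has degree $O(r)$, which is \emph{not} constant, and the statement you want to apply inductively to the components of $W$ assumes a constant-degree ambient variety. The dependence on that degree is not cosmetic: it enters the number of irreducible components of $W$ (up to $O(r)$ of them, each possibly of degree $O(r)$), the cell count and crossing bounds at the next level of partitioning, and the B\'ezout constants in your base case $e=1$ (where ``each $S$ not containing the curve meets it in $O(1)$ points'' is really $O(\deg S\cdot\deg V)$). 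Since $r$ is a power of $m$, these ``constants'' can swamp the main term, and summing the inductive bound over $O(r)$ components can likewise introduce extra factors of $r$ in the $+n$ terms. Closing this would require a partitioning/incidence machinery with explicit, controlled dependence on the degree of the ambient variety, which is a substantially harder statement than the one being proven.

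The paper avoids this entirely by keeping $r$ a \emph{constant} (large in terms of $\eps$): the partitioning polynomial has constant degree, so $V\cap Z(f)$ has a bounded number of constant-degree components and the induction on dimension is legitimate. Of course, a constant-degree partition cannot produce the main term in one step, so the paper runs a second induction on $m$ inside the cells (each cell has at most $m/r$ points), gaining a factor $r^{-\eps}$ per level; this is exactly where the $m^{\eps}$ loss in the exponent comes from, whereas your one-shot optimization would, if it worked, not need it. The paper also isolates as a separate case ($I_1$) the incidences where a variety of $\vrts$ fully contains a component of $V\cap Z(f)$, handling them directly via the $K_{s,t}$-free hypothesis ($\le t-1$ varieties contain any component with $\ge s$ points); your sketch folds this into the induction, which is acceptable in principle but again only if the degree issue is resolved. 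To repair your write-up with the least change, replace the optimized $r$ by a constant $r$, apply the induction hypothesis (on the number of points) rather than KST inside each cell, and add the containment case; that is essentially the paper's proof.
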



\section{Proof of Theorem \ref{th:energy}}\label{sec:proofofmain}
In this section we prove Theorem \ref{th:energy} by relating the bisector energy to an incidence problem between points and algebraic surfaces in $\RR^4$.
In Section \ref{sec:surfaces} we define the surfaces, in Section \ref{sec:intersections} we analyze their intersection properties,
and in Section \ref{sec:applyingbound} we apply the incidence bound of Theorem \ref{th:incidencebound} to prove Theorem \ref{th:energy}. Finally, in Section \ref{ssec:lowerbound} we derive Theorem \ref{th:lowerbound}, which provides a lower bound for Theorem \ref{th:energy}.

Throughout this section we assume that we have rotated $\pts$ so that no two points have the same $x$- or $y$-coordinate;
in particular, we assume that no perpendicular bisector is horizontal or vertical.

\subsection{Bisector surfaces}\label{sec:surfaces}
Recall that in Theorem \ref{th:energy} we consider an $n$-point set $\pts\subset \RR^2$.
We define
\[\ptss = \{(a,c)\in \pts^2\mid a\neq c\},\]
and similarly
\[ \ptsss = \{(a,b,c,d)\in \pts^4\mid a\neq c, b\neq d\}.\]
Recall also that for a pair $(a,b)\in \ptss$, we denote by $\bi{a,b}$ the perpendicular bisector of $a$ and $b$.
We define the \emph{bisector surface} of a pair $(a,c)\in \ptss$ as
\[S_{ac} = \{(b,d)\in \RR^4\mid (a,b,c,d)\in\ptsss,~\bi{a,b} = \bi{c,d}\},\]
and we set $\surfs = \{ S_{ac}\mid (a,c)\in \ptss\}$.
The surface $S_{ac}$ is not an algebraic variety (so we are using the word ``surface'' loosely), but the lemma below shows that $S_{ac}$ is ``close to'' a variety $\overline{S}_{ac}$.
That $S_{ac}$ is contained in a constant-degree variety of the same dimension is no surprise (one can take the \emph{Zariski closure}), but we need to analyze this variety in detail to establish the exact relationship.

We will work mostly with the surface $S_{ac}$ in the rest of this proof, rather than with the variety $\overline{S}_{ac}$, because its definition is easier to handle.
Then, when we apply our incidence bound, which holds only for varieties, we will switch to $\overline{S}_{ac}$. Fortunately, the lemma shows that  this makes no difference in terms of the incidence graphs.

\begin{lemma}\label{lem:surfaces}
For distinct $a,c\in \pts$, there exists a two-dimensional constant-degree algebraic variety $\overline{S}_{ac}$ such that $S_{ac}\subset \overline{S}_{ac}$.
Moreover, if $(b,d)\in (\overline{S}_{ac}\backslash S_{ac})\cap \ptss$,
then $(a,b,c,d)\not\in \ptsss$.
\end{lemma}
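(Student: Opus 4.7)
The plan is to define $\overline{S}_{ac}$ via an explicit rational parametrization of $S_{ac}$. For each $b\in\RR^2$ with $b\neq a$, the bisector $\bi{a,b}$ is a well-defined line, and (for $\bi{c,d}=\bi{a,b}$ to hold with $c\neq d$) the point $d$ must be the reflection $\rho_{\bi{a,b}}(c)$ of $c$ across this line. This reflection has an explicit rational expression in $b$ of bounded degree, so $\phi\colon b\mapsto(b,\rho_{\bi{a,b}}(c))$ is a rational map $\RR^2\dashrightarrow\RR^4$ of constant degree, generically injective in its first two coordinates. I will take $\overline{S}_{ac}$ to be the Zariski closure of the image of $\phi$; by standard bounds this is a constant-degree variety of dimension exactly two, and it contains $S_{ac}$ by construction.

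It is useful to have an accompanying polynomial description. The perpendicular bisector of distinct $p,q\in\RR^2$ is the zero locus of the linear equation $2(q-p)\cdot(x,y)=|q|^2-|p|^2$, so $\bi{a,b}=\bi{c,d}$ (when both are defined) is equivalent to the proportionality of the coefficient triples $(2(b-a),-(|b|^2-|a|^2))$ and $(2(d-c),-(|d|^2-|c|^2))$. This proportionality is captured by the vanishing of the three $2\times 2$ minors of the resulting $2\times 3$ matrix, giving three polynomial equations of degree at most three in $(b_x,b_y,d_x,d_y)$. All three vanish on $\overline{S}_{ac}$, so $\overline{S}_{ac}$ sits inside the common zero locus of these minors.

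For the moreover clause, suppose $(b,d)\in(\overline{S}_{ac}\setminus S_{ac})\cap\ptss$, so $b,d\in\pts$ with $b\neq d$. By the rotation assumption, no two distinct points of $\pts$ share an $x$- or $y$-coordinate, hence whenever $a\neq b$ the bisector $\bi{a,b}$ is a well-defined non-vertical line, and likewise for $\bi{c,d}$ when $c\neq d$. In that regime both coefficient triples are nonzero, so the minor equations (which hold on $\overline{S}_{ac}$) are equivalent to the geometric condition $\bi{a,b}=\bi{c,d}$; combined with $b\neq d$ this would place $(b,d)\in S_{ac}$, contradicting our assumption. Therefore $a=b$ or $c=d$, giving $(a,b,c,d)\notin\ptsss$. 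The main subtlety is the dimension and degree bound for $\overline{S}_{ac}$: two polynomial equations in $\RR^4$ need not cut out a variety of dimension exactly two, and the three minor equations satisfy a nontrivial algebraic dependence, so the cleanest path is to take the rational parametrization $\phi$ as the primary definition of $\overline{S}_{ac}$ and derive the polynomial description only afterwards, using it solely to transfer the bisector equality back to points of $\pts^2$.
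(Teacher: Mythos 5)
Your proof is correct, and it constructs $\overline{S}_{ac}$ by a genuinely different route than the paper. The paper derives two explicit polynomials $f_{ac},g_{ac}$ (from the conditions that $\bi{a,b}$ and $\bi{c,d}$ have equal slope and both pass through the two midpoints), sets $\overline{S}_{ac}=Z(f_{ac},g_{ac})$, and obtains two-dimensionality by showing that $Z(f_{ac},g_{ac})\setminus S_{ac}$ is a union of three planes while $S_{ac}$ itself is two-dimensional because each admissible $b$ determines a unique $d$ --- precisely the reflection observation that you promote to a definition. You instead take $\overline{S}_{ac}$ to be the Zariski closure of the image of $b\mapsto\bigl(b,\rho_{\bi{a,b}}(c)\bigr)$, which makes the dimension count immediate (the map records $b$ in its first two coordinates) and yields an irreducible variety without the spurious planes, at the cost of invoking the standard but not-entirely-free fact that the closure of the image of a bounded-degree rational map on $\RR^2$ has bounded degree; the paper explicitly mentions the Zariski-closure route and avoids it so as not to have to ``analyze this variety in detail,'' whereas you recover the needed detail through the minor equations. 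Your treatment of the \emph{moreover} clause via the $2\times2$ minors of the $2\times3$ coefficient matrix is a clean equivalent of the paper's: both reduce to the fact that once $a\neq b$ and $c\neq d$, the algebraic conditions force genuine equality of the bisectors, so any point of $(\overline{S}_{ac}\setminus S_{ac})\cap\ptss$ must have $a=b$ or $c=d$ (your version even avoids the paper's division by $a_y-b_y$, so the rotation assumption is not really needed at that step). To make the write-up airtight you should add two small verifications: that the minors indeed vanish on the image of the parametrization (when $\rho_{\bi{a,b}}(c)=c$ one row of the matrix is zero, and otherwise $\bi{c,d}=\bi{a,b}$ gives proportional rows), and a justification of the degree bound for the closure of the image, e.g.\ by passing through the graph of the map, which is cut out by bounded-degree polynomials.
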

\begin{proof}
Consider a point $(b,d)\in S_{ac}$ with $a\neq b$ and $c\neq d$.
Write the equation defining the perpendicular bisector $\bi{a,b}=\bi{c,d}$ as $y=sx+t$. The slope $s$ satisfies
\begin{equation} \label{eq:cond1}
s = -\frac{a_x-b_x}{a_y-b_y} = -\frac{c_x-d_x}{c_y-d_y}.
\end{equation}
By definition $\bi{a,b}$ passes through the midpoint $((a_x+b_x)/2$, $(a_y+b_y)/2)$ of $a$ and $b$, as well as through the midpoint $\left((c_x+d_x)/2,(c_y+d_y)/2\right)$ of $c$ and $d$. We thus have
\[  \frac{a_y+b_y}{2} - s\frac{a_x+b_x}{2} = t =  \frac{c_y+d_y}{2} - s\frac{c_x+d_x}{2}.\]
By replacing $s$ with both of the other expressions in \eqref{eq:cond1} and rearranging, we obtain
\begin{align}
(a_y-b_y)(c_x^2+c_y^2-d_x^2-d_y^2)
= (c_y-d_y)(a_x^2+a_y^2-b_x^2-b_y^2).
\label{eq:cond2}
\end{align}
 From \eqref{eq:cond1} and \eqref{eq:cond2} we see that $(b,d)=(x_1,x_2,x_3,x_4)$ satisfies
\begin{align*}
f_{ac}(x_1,x_2,x_3,x_4) &= (a_x-x_1)(c_y-x_4) - (a_y-x_2)(c_x-x_3) = 0, \\[2mm]
g_{ac}(x_1,x_2,x_3,x_4) &= (a_y-x_2)(c_x^2+c_y^2-x_3^2-x_4^2)
- (c_y-x_4)(a_x^2+a_y^2-x_1^2-x_2^2)=0.
\end{align*}
Since any point $(b,d)\in S_{ac}$ satisfies these two equations, we have
\[S_{ac}\subset Z(f_{ac},g_{ac}) = \overline{S}_{ac}.\]

By reexamining the above analysis, we see that if a point $(b,d)\in \overline{S}_{ac}\cap \ptss$ is not in $S_{ac}$,
we must have $a_y=b_y$ or $c_y=d_y$, since then \eqref{eq:cond1} is not well defined.
By the assumption that no two points of $\pts$ have the same $y$-coordinate,
this implies $a = b$ or $c = d$, so $(a,b,c,d)\not\in \ptsss$.

It remains to prove that $\overline{S}_{ac}$ is a constant-degree two-dimensional variety. The constant degree is immediate from $f_{ac}$ and $g_{ac}$ being polynomials of degree at most three.
As just observed, points in $(b,d)\in \overline{S}_{ac} \minus S_{ac}$ satisfy   $a_y=b_y$ or $c_y=d_y$. If $a_y=b_y$, then for $f_{ac}(b,d)=g_{ac}(b,d)=0$ to hold, either $a_x=b_x$ or $c_y=d_y$. Similarly, If $c_y=d_y$, then either $c_x=d_x$ or $a_y=b_y$.
We see that in each case we get two independent linear equations, which define a plane,
so $\overline{S}_{ac} \minus S_{ac}$ is the union of three two-dimensional planes. Thus, it suffices to prove that $S_{ac}$ is two-dimensional.
For this, we simply show that for any valid value of $b$ there is at most one valid value of $d$.
Let $C_{ac}\subset \RR^2$ denote the circle that is centered at $c$ and incident to $a$.
It is impossible for $b$ to lie on $C_{ac}$, since this would imply that the bisector $\bi{a,b}$ contains $c$, and thus that $\bi{a,b}\neq\bi{c,d}$.
For any choice of $b\notin C_{ac}$, the bisector $\bi{a,b}$ is well-defined and is not incident to $c$, so there is a unique $d\in \RR^2$ with $\bi{a,b}=\bi{c,d}$ (i.e., so that $(b,d)\in S_{ac}$).
\end{proof}

\subsection{Intersections of bisector surfaces}\label{sec:intersections}

We denote by $\refl_{ab}$ the reflection of $\RR^2$ across the line $\bi{a,b}$.
Observe that if $\bi{a,b} = \bi{c,d}$, then $\refl_{ab} = \refl_{cd}$, and this reflection maps both $a$ to $b$ and $c$ to $d$; this in turn implies that $|ac| = |bd|$.
That is, $(b,d)\in S_{ac}$ implies $|ab|=|cd|$.
It follows that if $|ac| = \delta$, then the open surface $S_{ac}$ is contained in the hypersurface
\[H_\delta = \{(b,d)\in \RR^4 \mid |bd| = \delta\}.\]
We can thus partition $\surfs$ into classes corresponding to the distances $\delta$ that are determined by pairs of points of $\pts$. Each class consists of the surfaces $S_{ac}$ with $|ac|=\delta$, all of which are fully contained in $H_\delta$.

We now study the intersection of the surfaces contained in a common hypersurface $H_\delta$.

\begin{lemma}\label{lem:intersection}
Let $(a,c)\neq (a',c')$ and $|ac| = |a'c'| = \delta \neq0$. Then there exist two curves $C_1,C_2\subset \RR^2$, which are either two concentric circles or two parallel lines,
such that $a,a'\in C_1$, $c,c'\in C_2$, and $S_{ac}\cap S_{a'c'}$ is contained in the set
\[ H_\delta\cap(C_1\times C_2) = \{(b,d)\in \RR^4\mid b\in C_1,d\in C_2,|bd| = \delta\}.\]
\end{lemma}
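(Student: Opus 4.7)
The plan is to study $S_{ac}\cap S_{a'c'}$ through the reflections associated with each perpendicular bisector, and to show that a canonical orientation-preserving isometry of $\RR^2$ controls the entire intersection.

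For any $(b,d)\in S_{ac}\cap S_{a'c'}$, let $\refl_{ab}$ and $\refl_{a'b}$ denote the reflections across $\bi{a,b}=\bi{c,d}$ and $\bi{a',b}=\bi{c',d}$, respectively. By definition $\refl_{ab}$ swaps $a\leftrightarrow b$ and $c\leftrightarrow d$, while $\refl_{a'b}$ swaps $a'\leftrightarrow b$ and $c'\leftrightarrow d$. Hence $T:=\refl_{a'b}\circ \refl_{ab}$ sends $a\mapsto a'$ and $c\mapsto c'$. The crucial observation is that, since $|ac|=|a'c'|\neq 0$, there is a unique orientation-preserving isometry of $\RR^2$ with these properties, so $T$ depends only on $a,c,a',c'$, not on the choice of $(b,d)$. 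One should also check that the degenerate case $a=a'$ forces $\bi{a,b}=\bi{a',b}$, hence (via $\bi{c,d}=\bi{c',d}$) also $c=c'$, contradicting $(a,c)\neq(a',c')$; a symmetric argument excludes $c=c'$, so we may assume $a\neq a'$ and $c\neq c'$.

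Split according to the type of $T$. If $T$ is a rotation with center $O$, then by the classical identity that a composition of two reflections with meeting axes is a rotation around their intersection, $O=\bi{a,b}\cap\bi{a',b}$. From $O\in\bi{a,b}=\bi{c,d}$ we read off $|Oa|=|Ob|$ and $|Oc|=|Od|$; from $O\in\bi{a',b}=\bi{c',d}$ we read off $|Oa'|=|Ob|$ and $|Oc'|=|Od|$. Taking $C_1,C_2$ to be the concentric circles around $O$ of radii $|Oa|$ and $|Oc|$ gives $a,a',b\in C_1$ and $c,c',d\in C_2$. If instead $T$ is a translation by some vector $\vec v$, then the two axes are parallel and $\vec v=a'-a=c'-c$. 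Since $\refl_{ab}$ swaps $a$ and $b$, the segment $ab$ is perpendicular to $\bi{a,b}$ and hence parallel to $\vec v$; thus $b$ lies on the line $C_1$ through $a$ and $a'$ in direction $\vec v$, and symmetrically $d$ lies on the parallel line $C_2$ through $c$ and $c'$.

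In both cases, the curves $C_1,C_2$ depend only on $a,c,a',c'$, they contain $\{a,a'\}$ and $\{c,c'\}$ respectively, and every $(b,d)\in S_{ac}\cap S_{a'c'}$ satisfies $b\in C_1$, $d\in C_2$, and $|bd|=\delta$ (the last being automatic from $S_{ac}\subset H_\delta$). This yields the stated inclusion. The main conceptual step is the observation that the isometry $T$ is canonical, i.e., the same for every $(b,d)$ in the intersection; once this is noted, the dichotomy between concentric circles and parallel lines is immediate from the standard classification of compositions of two plane reflections. The only mild technicalities are dismissing the degenerate subcases where two of the four distinguished points coincide, which quickly reduces the intersection to the empty set.
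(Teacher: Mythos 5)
Your proof is correct and follows essentially the same route as the paper's: compose the two reflections to obtain an isometry $T=\refl_{a'b}\circ\refl_{ab}$ sending $a\mapsto a'$ and $c\mapsto c'$, then read off the concentric circles or parallel lines according to whether $T$ is a rotation or a translation. Your justification that $T$ is canonical (uniqueness of the orientation-preserving isometry determined by two point--image pairs with $|ac|=|a'c'|\neq 0$) is in fact slightly cleaner than the paper's, which instead locates the rotation center as $\bi{a,a'}\cap\bi{c,c'}$ and therefore needs separate cases for when those two bisectors coincide or are parallel.
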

\begin{proof}
We split the analysis into three cases: (i) $|\bi{a,a'}\cap \bi{c,c'}|=1$, (ii) $\bi{a,a'} = \bi{c,c'}$, and (iii) $\bi{a,a'}\cap \bi{c,c'} =\emptyset$. The three cases are depicted in Figure \ref{fi:cases}.

\begin{figure}[ht]
\centerline{\includegraphics[width=0.7\textwidth]{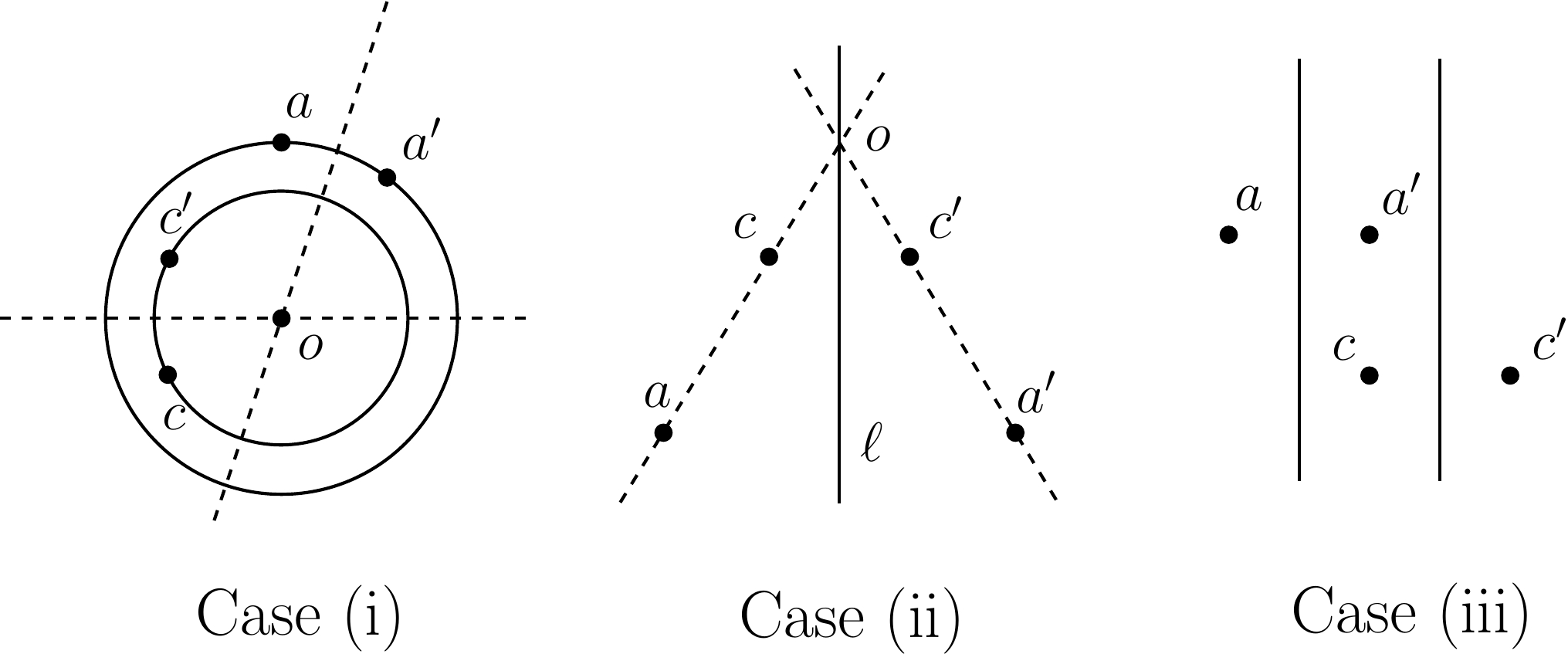}}
\caption{\small \sf The three cases in the analysis of Lemma \ref{lem:intersection}.}
\label{fi:cases}
\end{figure}

{\bf Case (i).} Let $o = \bi{a,a'}\cap \bi{c,c'}$.
Then there exist two (not necessarily distinct) circles $C_1,C_2$ around $o$ such that $a,a'\in C_1$ and $c,c'\in C_2$ .
If $(b,d)\in S_{ac}\cap S_{a'c'}$, then the reflection $\refl_{ab}$ takes $a$ to $b$ and $c$ to $d$, and similarly, $\refl_{a'b}$ takes $b$ to $a'$ and $d$ to $c'$.
We set $\textbf{T} = \refl_{a'b}\circ \refl_{ab}$, and notice that this is a rotation whose center $o^*$ is the intersection point of  $\bi{a,b}=\bi{c,d}$ and $\bi{a',b}=\bi{c',d}$.
Note that $\textbf{T}(a) = a'$ and $\textbf{T}(c) = c'$, so $o^*$ lies on both $\bi{a,a'}$ and $\bi{c,c'}$. Since $o=\bi{a,a'}\cap \bi{c,c'}$, we obtain that $o=o^*$.
Since $\bi{a,b}$ passes through $o$, we have that $b$ is incident to $C_1$. Similarly, since $\bi{c,d}$ passes through $o$, we have that $d$ is incident to $C_2$.
This implies that $(b,d)$ lies in $H_\delta\cap (C_1\times C_2)$.

{\bf Case (ii).}
Let $\ell$ be the line $\bi{a,a'} = \bi{c,c'}$.
The line segment $ac$ is a reflection across $\ell$ of the line segment $a'c'$. Thus, the intersection point $o$ of the lines that contains these two segments is incident to $\ell$. Let $C_1$ be the circle centered at $o$ that contains $a$ and $a'$, and let $C_2$ be the circle centered at $o$ that contains $c$ and $c'$.
With this definition of $o$, $C_1$, and $C_2$, we can repeat the analysis of case (i), obtaining the same conclusion.

{\bf Case (iii).} In this case $\bi{a,a'}$ and $\bi{c,c'}$ are parallel.
The analysis of this case is similar to that in case (i),
but with lines instead of circles.

Let $C_1$ be the line that is incident to $a$ and $a'$, and let $C_2$ be the line that is incident to $c$ and $c'$.
If $(b,d)\in S_{ac}\cap S_{a'c'}$, then, as before, $\refl_{ab}$ takes $a$ to $b$ and $c$ to $d$, and $\refl_{a'b}$ takes $b$ to $a'$ and $d$ to $c'$.
Since $\bi{a',b}$ and $\bi{a,b}$ are parallel, we have that $\textbf{T} = \refl_{a'b}\circ \refl_{ab}$ is a translation in the direction orthogonal to these two lines. This implies that $b\in C_1$ and $d\in C_2$, which completes the analysis of this case.
\end{proof}

In Section \ref{sec:applyingbound}, we will apply the incidence bound of Theorem \ref{th:incidencebound} to the point set $\ptss=\{(b,d)\in \pts^2\mid b\neq d\}$ and the set of surfaces $\surfs$.
For this we need to show that the incidence graph contains no complete bipartite graph $K_{2,M}$;
that is, that for any two points of $\ptss$ (where $\ptss$ is considered as a point set in $\RR^4$) there is a bounded number of surfaces of $\surfs$ that contain both points.
In the following lemma we prove the more general statement that the incidence graph contains no copy of $K_{2,M}$ and no copy of $K_{M,2}$.
Note that this is the only point in the proof of Theorem \ref{th:energy} where we use the condition that no $M$ points are on a line or circle.

\begin{corollary}\label{cor:nobipartite}
If no line or circle contains $M$ points of $\pts$,
then the incidence graph of $\ptss$ and $\surfs$ does not contain a copy of $K_{2,M}$ or $K_{M,2}$.
\end{corollary}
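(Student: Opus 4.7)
The plan is to reduce the nonexistence of both $K_{2,M}$ and $K_{M,2}$ to a single application of Lemma \ref{lem:intersection}, exploiting the manifest symmetry of the bisector relation: the defining condition $(b,d)\in S_{ac}$, namely $\bi{a,b}=\bi{c,d}$, is identical to the condition $(a,c)\in S_{bd}$. Consequently, a $K_{2,M}$ in the incidence graph of $\ptss$ and $\surfs$ corresponds to a $K_{M,2}$ after interchanging the two indexing pairs, so it suffices to rule out the $K_{M,2}$ case and then invoke this duality.

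To rule out $K_{M,2}$, I would suppose for contradiction that there are $M$ distinct pairs $(b_1,d_1),\ldots,(b_M,d_M)\in\ptss$ that all lie on two surfaces $S_{a_1c_1}$ and $S_{a_2c_2}$ with $(a_1,c_1)\neq(a_2,c_2)$. The opening observation of Section \ref{sec:intersections} forces $|b_id_i|=|a_jc_j|$ for every incidence, so $|a_1c_1|=|a_2c_2|=\delta$, and $\delta>0$ because $a_j\neq c_j$. Lemma \ref{lem:intersection} then produces two curves $C_1,C_2$ (concentric circles or parallel lines) such that $S_{a_1c_1}\cap S_{a_2c_2}\subset H_\delta\cap(C_1\times C_2)$; in particular $b_i\in C_1$ for every $i$.

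To finish, I would verify that the $b_i$ are pairwise distinct. The last paragraph of the proof of Lemma \ref{lem:surfaces} shows that for any $b$ there is at most one $d$ with $(b,d)\in S_{a_1c_1}$, so $b_i=b_j$ would force $d_i=d_j$ and contradict the distinctness of the pairs. Thus $C_1$ is a line or circle containing $M$ distinct points of $\pts$, contradicting the hypothesis. The $K_{2,M}$ case is then immediate from the symmetry: two distinct pairs $(b_1,d_1),(b_2,d_2)$ incident to $M$ surfaces $S_{a_jc_j}$ yield $M$ distinct indexing pairs $(a_j,c_j)$ incident to the two surfaces $S_{b_1d_1}$ and $S_{b_2d_2}$ (which are indexed by distinct pairs since $(b_1,d_1)\neq(b_2,d_2)$), and the $K_{M,2}$ argument applies verbatim. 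There is no substantive obstacle here: all the geometry has been packaged into Lemma \ref{lem:intersection}, and the corollary amounts to unwinding the definitions and exploiting the dual description of the incidence relation.
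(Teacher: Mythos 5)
Your proposal is correct and follows essentially the same route as the paper: Lemma \ref{lem:intersection} plus the injectivity of $b\mapsto d$ on $S_{ac}$ handles the $K_{M,2}$ case, and the duality $(b,d)\in S_{ac}\Leftrightarrow(a,c)\in S_{bd}$ (which the paper packages as the dual surfaces $S^*_{bd}$) transfers the bound to $K_{2,M}$. The only differences are cosmetic: you phrase it as a contradiction and make explicit the small checks (that $\delta>0$ and that the $b_i$ are pairwise distinct) which the paper leaves implicit.
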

\begin{proof}
Consider two distinct surfaces $S_{ac},S_{a'c'}\in\surfs$ with $|ac|=|a'c'|=\delta$. Lemma \ref{lem:intersection} implies that there exist two lines or circles $C_1,C_2$ such that
$(b,d)\in S_{ac}\cap S_{a'c'}$ only if $b\in C_1$ and $d\in C_2$.
Since no line or circle contains $M$ points of $\pts$, we have $|C_1\cap \ptss|< M$.
Given $b\in (C_1\cap \pts)\backslash \{a\}$,
there is at most one $d\in \pts$ such that $\bi{a,b} = \bi{c,d}$, and thus at most one point $(b,d)\in S_{ac}$. (Notice that no points of the form $(a,d)\in\ptss$ are in $S_{ac}$.)
Thus
\[|(S_{ac}\cap S_{a'c'})\cap \ptss| < M.\]
That is, the incidence graph contains no copy of $K_{M,2}$.

We now define ``dual'' surfaces
\[S_{bd}^* = \{(a,c)\in \RR^4\mid a\neq b, c\neq d, \bi{a,b} = \bi{c,d}\},\]
and set $\mathcal{S}^* = \{S_{bd}^*\mid (b,d)\in \ptss\}$.
By a symmetric argument, we get
\[|(S^*_{bd}\cap S^*_{b'd'})\cap \ptss| < M\]
for all $(b,d)\neq (b',d')$.
Observe that $(a,c)\in S^*_{bd}$ if and only if $(b,d)\in S_{ac}$.
Hence, having fewer than $M$ points $(a,c)\in (S^*_{bd}\cap S^*_{b'd'})\cap \ptss$ is equivalent to having fewer than $M$ surfaces $S_{ac}$ that are incident to both $(b,d)$ and $(b',d')$.
That is, the incidence graph contains no copy of $K_{2,M}$.
\end{proof}


\subsection{Applying the incidence bound}\label{sec:applyingbound}
We set
\[ Q = \{ (a,b,c,d) \in \ptsss \mid \ \bi{a,b} = \bi{c,d}\}, \]
and note that $|Q|+\binom{n}{2} = \en(\pts)$,
where the term $\binom{n}{2}$ accounts for the quadruples of the form $(a,b,a,b)$.
As we saw in Section \ref{sec:intersections}, every quadruple $(a,b,c,d)\in Q$ satisfies $|ac|=|bd|$.

Let $\delta_1,\ldots, \delta_D$ denote the distinct distances that are determined by pairs of distinct points in $\pts$.
We partition $\ptss$ into the disjoint subsets $\Pi_1,\ldots,\Pi_D$, where
\[\Pi_i = \{(u,v)\in\ptss\mid |uv|=\delta_i\}.\]
We also partition $\surfs$ into disjoint subsets $\surfs_1,\ldots,\surfs_D$,
defined by
\[\surfs_i = \{S_{ac}\in \surfs\mid |ac| = \delta_i\}.\]
%
Let $m_i$ be the number of $(a,c)\in\ptss$ such that $|ac| =\delta_i$.
Note that $|\Pi_i| = |\surfs_i| = m_i$ and
\[\sum m_i = n(n-1).\]

A quadruple $(a,b,c,d)\in\ptsss$ is in $Q$ if and only if the point $(b,d)$ is incident to $S_{ac}$.
Moreover, there exists a unique $1\le i\le D$ such that $(b,d)\in \Pi_i$ and $S_{ac}\in \surfs_i$. Therefore, it suffices to study each $\Pi_i$ and $\surfs_i$ separately. That is, we have
\[ |Q| =\sum_{i=1}^D I(\Pi_i,\surfs_i).\]

We apply our incidence bound to $\surfs_i$, or rather, to the corresponding set of varieties $\surfsc_i=\{\overline{S}_{ac}\mid S_{ac}\in \surfs_i\}$.
By Lemma \ref{lem:surfaces}, the incidence graph of $\Pi_i$ with $\surfsc_i$ is the same as with $\surfs_i$,
hence also does not contain any copy of $K_{2,M}$ by Corollary \ref{cor:nobipartite}.
Observe that $\Pi_i\subset H_{\delta_i}$.
The hypersurface $H_{\delta_i}$ is irreducible, three-dimensional, and of a constant degree, since it is defined by the irreducible polynomial $(x_1-x_3)^2+(x_2-x_4)^2 - \delta_i$.
Thus we can apply Theorem \ref{th:incidencebound} to each $I(\Pi_i, \surfsc_i)$,
with $m=n=m_i$, $V = H_{\delta_i} $, $d=4$, $e=3$, $s=2$, and $t=M$.
This implies that
\begin{equation} \label{eq:Inc-i}
I(\Pi_i,\surfs_i) = I(\Pi_i,\surfsc_i) = O\left(M^{\frac{2}{5}} m_i^{\frac{7}{5}+\eps} + Mm_i \right).
\end{equation}

Let $J$ be the set of indices $1\le j\le D$ for which the bound in \eqref{eq:Inc-i} is dominated by the term $M^{\frac{2}{5}} m_j^{\frac{7}{5}+\eps}$. By recalling that $\sum_{j=1}^D m_j = n(n-1)$, we get
\[\sum_{j\not\in J}I(\Pi_j,\surfs_j) = O\left(Mn^2\right).\]
Next we consider $\sum_{j\in J}I(\Pi_j,\surfs_j) = O(\sum_{j\in J} M^{2/5} m_j^{7/5+\eps} )$.
By \cite[Proposition 2.2]{GK11}, we have
\[\sum m_j^2 = O(n^3\log n).\]
This implies that the number of $m_j$ for which $m_j\geq x$ is $O(n^3\log n/x^2)$. By using a dyadic decomposition, we obtain
\begin{align*}
M^{-2/5}n^{-\eps}\sum_{j\in J}I(\Pi_j,\surfs_j) &= O\left( \sum_{m_j \le \Delta} m_j^{7/5} + \sum_{k\ge 1} \sum_{2^{k-1}\Delta<m_j\le 2^{k}\Delta}  m_j^{7/5}\right)  \\
&= O\left(  \Delta^{7/5}\cdot \frac{n^2}{\Delta} + \sum_{k\ge 1} (2^k\Delta)^{\frac{7}{5}} \cdot \frac{n^3\log n}{(2^k\Delta)^2}\right)  \\
&= O\left( \Delta^{2/5}n^2 +  \frac{n^3\log n}{\Delta^{3/5}} \right).
\end{align*}
By setting $\Delta = n\log n$, we have
\[ \sum_{j\in J}I(\Pi_j,\surfs_j)
= O\left(M^{\frac{2}{5}}n^{\frac{12}{5}+\eps} \log^{\frac{2}{5}} n  \right)
= O\left(M^{\frac{2}{5}}n^{\frac{12}{5}+\eps'}  \right).
\]

In conclusion,
\[\en(\pts) \leq  |Q| +n^2
=\sum_{j\in J}I(\Pi_j,\surfs_j)+\sum_{j\not\in J}I(\Pi_j,\surfs_j) +n^2
= O\left(M^{\frac{2}{5}}n^{\frac{12}{5}+\eps'}+Mn^2\right),\]
which completes the proof of Theorem \ref{th:energy}.

\paragraph{Remark about the incidence bound.}
Instead of partitioning the problem into $D$ separate incidence problems,
one can apply an incidence bound directly to the point set $\ptss$ and the surface set $\surfs$.
Roughly speaking, the best known bounds for incidences with two-dimensional surfaces in $\RR^4$, whose incidence graph contains no $K_{2,M}$,
are of the form $|\ptss|^{2/3}|\surfs|^{2/3}$.
Relying on such an incidence bound (and not using the estimate from \cite{GK11}) would yield a bound $|Q| = O(M^{1/3}n^{8/3} +Mn^2)=O(M^{1/3}n^{8/3})$, which is nontrivial but weaker than our bound.




\subsection{A lower bound for $\en(\pts)$}\label{ssec:lowerbound}

In this section we prove Theorem \ref{th:lowerbound}. In particular, for any $n$ and $M(n)\ge 32$, we show that there exists a set $\pts$ of $n$ points in $\mathbb{R}^2$ such that any line or circle contains at most $M(n)$ points of $\pts$, and $\en(\pts) = \Omega\left(M(n)n^2 \right)$.
Note that we can suppose $M(n) \ge 32$ without loss of generality since, if $M(n) < 32$, an arbitrary point set has $\en(\pts) = \Omega\left(n^2\right) = \Omega \left (M(n)n^2 \right)$.

For simplicity, we assume that $M(n)$ is a multiple of 8, and that $n$ is divisible by $M(n)$. It is straightforward to extend the following construction to values that do not satisfy these conditions.

\begin{figure}[ht]
\centerline{\includegraphics[width=0.8\textwidth]{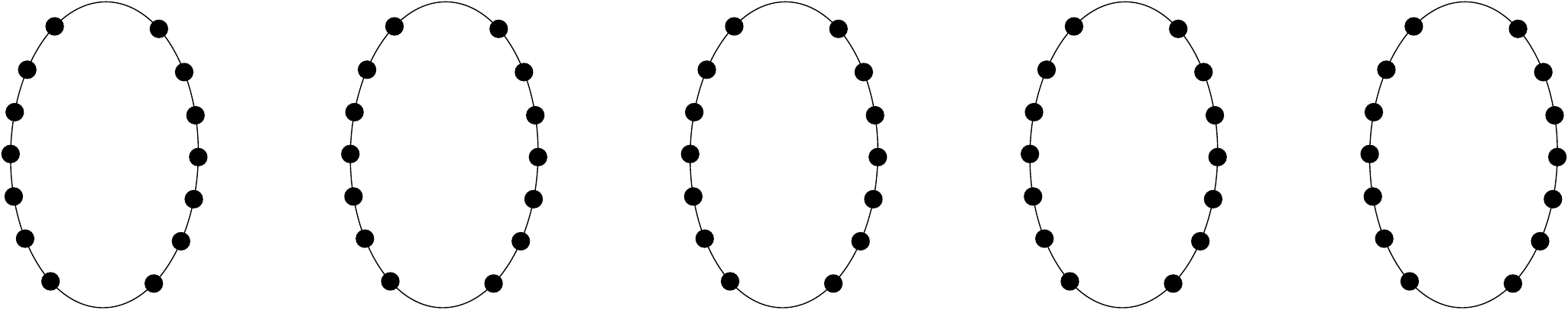}}
\caption{\small \sf The lower bound construction.}
\label{fi:ellipses}
\end{figure}

Let $C$ be an axis-parallel ellipse that is centered at the origin, has a major axis of length $2$ that is parallel to the $y$-axis, and a minor axis of length $1$ that is parallel to the $x$-axis.
Let $\pts^{+}$ be an arbitrary set of $4n/M(n)$ points on $C$, each having a strictly positive $x$-coordinate.
Let $\pts^{-}$ be the reflection of $\pts^+$ over the $y$-axis, and set $\pts' = \pts^+ \cup \pts^-$.
We denote by $\pts'_j$ the translate of $\pts'$ by $(4j,0)$.
Finally, we take $\pts = \pts'_0 \cup \pts'_1 \cup \cdots \cup\pts'_{M(n)/8 -1}$. An example is depicted in Figure \ref{fi:ellipses}.

Note that $\pts$ lies on the union of $M(n)/8$ ellipses. Since a line can intersect an ellipse in at most two points, and a circle can intersect an ellipse in at most four points, we indeed have that a line or circle contains at most $M(n)$ points of $\pts$.

It remains to prove that $\en(\pts) = \Omega(M(n)n^2)$.
For every integer $M(n)/32 \leq j \leq M(n)/16$, we denote by $\ell_j$ the vertical line $x = 4j$.
For every such $j$, there are $\Theta(n)$ points of $\pts$ that are to the left $\ell_j$, and the reflection of each such point across $\ell_j$ is another point of $\pts$.
That is, for every $M(n)/32 \leq j \leq M(n)/16$, the line $\ell_j$ is the perpendicular bisector of $\Theta(n)$ pairs of points of $\pts$.
The assertion of the theorem follows, since there are $\Theta(M(n))$ such lines, each contributing $\Theta(n^2)$ to $\en(\pts)$.


\section{Proof of Theorem \ref{th:fewdistances}}\label{sec:fewdistances}

In this section we prove that Theorem \ref{th:fewdistances} follows from Theorem \ref{th:energy}.
In fact, we prove the following more general version of Theorem \ref{th:fewdistances}.

\begin{theorem}\label{th:fewdistances2} Let $K(n)$ and $M(n)$ be two functions satisfying $ K(n)=O(\log n)$ and $ M(n)=O(n^{1/4})$.
If an $n$-point set $\pts\subset \RR^2$ spans $D=O(n/K(n))$ distinct distances,
then at least one of the following holds.\\
(i) There exists a line or a circle containing $M(n)$ points of $\pts$. \\
(ii) There are $\Omega(M(n)^{-\frac{12}{5}}n^{\frac{8}{5}-\eps})$
lines that contain $\Omega(K(n))$ points of $\pts$.
\end{theorem}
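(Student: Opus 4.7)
The plan is to combine a Szemer\'edi-style count of isoceles triples with the bisector-energy bound of Theorem~\ref{th:energy}. I would assume that alternative~(i) fails, so that no line or circle contains $M(n)$ points of $\pts$; then Theorem~\ref{th:energy} applies, and since $M(n) = O(n^{1/4})$ is well below $n^{2/3}$, its first term dominates and $\en(\pts) = O(M(n)^{2/5} n^{12/5+\eps})$.

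The first step is to count ordered isoceles triples, i.e.\ triples $(a,p,c)\in\pts^3$ with $a\neq c$ and $|pa|=|pc|$. For each $p\in\pts$, set $r_p(\delta) = |\{q \in \pts : |pq| = \delta\}|$. Since $\sum_\delta r_p(\delta) = n-1$ while only $D = O(n/K(n))$ distances appear, Cauchy--Schwarz gives $\sum_\delta r_p(\delta)^2 \ge (n-1)^2/D = \Omega(nK(n))$, and subtracting diagonal terms still leaves $\Omega(nK(n))$ ordered pairs $(a,c)$ with $a\neq c$ and $|pa|=|pc|$. Summing over $p$ yields $\Omega(n^2 K(n))$ isoceles triples in total. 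Each triple is equivalent to an incidence $p\in\bi{a,c}$, so introducing
\[
m(\ell) = |\{(a,c)\in\pts^2 : a\neq c,\ \bi{a,c}=\ell\}|, \qquad i(\ell) = |\ell\cap\pts|,
\]
this count rephrases as
\[
\sum_{\ell\in\bis(\pts)} m(\ell)\,i(\ell) = \Omega(n^2 K(n)).
\]

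Next I would split the bisectors by their richness at a threshold proportional to $K(n)$. The lines with $i(\ell) < c\,K(n)$ contribute at most $c\,K(n) \cdot \sum_\ell m(\ell) = c\,K(n)\cdot n(n-1)$ to the sum above, so for a suitably small absolute constant $c$ the ``rich'' set $L = \{\ell\in\bis(\pts) : i(\ell)\ge c\,K(n)\}$ still accounts for an $\Omega(n^2 K(n))$ share. Because~(i) fails, every $\ell\in L$ has $i(\ell) < M(n)$, hence
\[
\sum_{\ell\in L} m(\ell) \;\ge\; \frac{1}{M(n)} \sum_{\ell\in L} m(\ell)\,i(\ell) \;=\; \Omega\!\left(\frac{n^2 K(n)}{M(n)}\right).
\]

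The final step is Cauchy--Schwarz between the first and second moments of $m$ on $L$, combined with the identity $\sum_\ell m(\ell)^2 = \en(\pts)$:
\[
|L| \;\ge\; \frac{\bigl(\sum_{\ell\in L} m(\ell)\bigr)^2}{\sum_{\ell\in L} m(\ell)^2} \;\ge\; \Omega\!\left(\frac{n^4 K(n)^2/M(n)^2}{\en(\pts)}\right) \;=\; \Omega\!\left(\frac{n^{8/5-\eps}\,K(n)^2}{M(n)^{12/5}}\right),
\]
and absorbing the harmless factor $K(n)^2 = O(\log^2 n)$ into $n^{\eps}$ yields the claimed bound $\Omega(M(n)^{-12/5} n^{8/5 - \eps})$ on the number of rich lines, each of which carries $\Omega(K(n))$ points of $\pts$. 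The only real subtlety is calibrating the threshold $c\,K(n)$ so that a constant fraction of the isoceles-triple mass concentrates on lines whose $i(\ell)$ lies in the window $[c\,K(n),\,M(n))$; the hypotheses $K(n)=O(\log n)$ and $M(n)=O(n^{1/4})$ place us comfortably in this regime, and once the partition is set up the rest is a direct application of Theorem~\ref{th:energy}.
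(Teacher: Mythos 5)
Your proposal is correct and follows essentially the same route as the paper: assume (i) fails, lower-bound the isoceles-triple count by $\Omega(n^2K(n))$ via Cauchy--Schwarz over the $D$ distances, discard bisectors with fewer than $cK(n)$ incident points, use $i(\ell)<M(n)$ on the rich lines, and finish with Cauchy--Schwarz against $\sum_\ell m(\ell)^2=\en(\pts)$ bounded by Theorem~\ref{th:energy}. The only cosmetic difference is that you first divide out $M(n)$ and then apply Cauchy--Schwarz to $\sum_L m(\ell)$, whereas the paper applies Cauchy--Schwarz directly to $\sum_L \mu(\ell)\rho(\ell)$ and bounds $\sum_L\rho(\ell)^2\le M(n)^2|L|$; the two computations are identical.
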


\noindent
Since Guth and Katz \cite{GK11} proved that any $n$-point set spans $\Omega(n/\log n)$ distinct distances, the assumption that $K=O(\log n)$ is not a real restriction.
The original formulation of Theorem \ref{th:fewdistances} is immediately obtained by setting $K(n)=\sqrt{\log n}$ and $M(n) = n^\alpha$.

\noindent \begin{proof}
For simplicity, we use the notation $K=K(n)$ and $M=M(n)$ throughout this proof.
We assume that (i) does not hold, and prove that (ii) holds in this case.

Given a point set $\pts\subset \RR^2$, we denote by $\bism(\pts)$ the \emph{multiset} of bisectors that are spanned by ordered pairs of $\ptss$.
Recall that $\bis(\pts)$ is the \emph{set} of distinct lines of $\bism(\pts)$.
For every line $\ell\in\bis(\pts)$, we denote by $\mu(\ell)$ its \emph{multiplicity} in $\bism(\pts)$ (i.e., the number of times it occurs in the multiset),
and set $\rho(\ell)=|\ell\cap \pts|$.
We define
\[I(\pts,\bism(\pts)) = \sum_{\ell\in \bis(\pts)} \mu(\ell)\rho(\ell);\]
that is, $I(\pts,\bism(\pts))$ is the number incidences with respect to their multiplicities.

We derive a lower bound on $I(\pts,\bism(\pts))$ by using an argument that is similar to the one in Szemer\'edi's proof of Theorem \ref{th:Szem}.
Let $T \subset \pts^3$ be the set of triples $(p,q,r)$ of distinct points of $\pts$ such that $|pq|=|pr|$.
Note that a triple $(p,q,r)$ is in $T$ if and only if $p$ is incident to $\bi{q,r}$. That is,
\[I(\pts,\bism(\pts)) = |T|.\]

Denote the distances that are determined by pairs of $\ptss$ as $\delta_1,\ldots,\delta_D$.
For every point $p\in\pts$ and $1\le i \le D$, let $\Delta_{i,p}$ denote the number of points of $\pts$ that have distance $\delta_i$ from $p$.
Let $T_p \subset T$ denote the set of triples of $T$ in which the first element is $p$.
Applying the Cauchy-Schwarz inequality yields
\[|T_p| = \Omega\left(\sum_{i=1}^D \Delta_{i,p}^2\right)
=\Omega\left(\frac{1}{D}\left(\sum_{i=1}^D \Delta_{i,p}\right)^2 \right)
= \Omega\left(\frac{n^2}{D}\right).\]
This in turn implies
\begin{equation} \label{eq:lowerT}
I(\pts,\bism(\pts)) = |T| = \sum_{p\in \pts} |T_p| = \Omega\left(\frac{n^3}{D}\right).
\end{equation}

We remark that by the Szemer\'edi-Trotter theorem \cite{ST83}, the number of incidences between $n$ points and $n^2$ distinct lines is $O\left(n^2\right)$.
This does not contradict \eqref{eq:lowerT} since the lines in the multiset $\bism(\pts)$ need not be distinct. Theoretically, it might be that $\bism(\pts)$ consists of $\Theta(n)$ distinct lines, each with multiplicity $\Theta(n)$ and incident to $\Theta(K)$ points.
However, our bound on the bisector energy excludes such cases.

Let $c_t$ be the constant implicit in the lower bound on $|T|$; we have
\[|T| \geq c_t Kn^2.\]
Let $L^+$ be the subset of lines in $\bis(\pts)$ that are each incident to at least $c_t K / 2$ points.
Then
\begin{align*}
c_t Kn^2 & \leq  \sum_{\ell\in \bis(\pts)} \mu(\ell)\rho(\ell), \\
&= \sum_{\ell \in L^+} \mu(\ell)\rho(\ell) + \sum_{\ell \in \bis(\pts) \minus L^+} \mu(\ell)\rho(\ell), \\
&\leq  \sum_{\ell \in L^+} \mu(\ell)\rho(\ell) + c_t K n^2 /2,
\end{align*}
where we use the fact that each ordered pair of points has a unique bisector, and hence contributes to $\sum_{\ell \in \bis(\pts)} \mu(\ell)$ exactly once.
Applying Cauchy-Schwarz, we get
\[
c_t^2 K^2 n^4/4 \leq \sum_{\ell \in L^+} \mu(\ell)^2 \sum_{\ell \in L^+} \rho(\ell)^2.
\]

Note that $\sum_{\ell \in \bis(\pts)} \mu(\ell)^2 = \Theta(\en(\pts))$. 
Since $M = O(n^{1/2}) = O(n^{2/3-\eps})$, Theorem \ref{th:energy} implies $\sum_{\ell \in \bis(\pts)} \mu(\ell)^2 = O(M^{2/5}n^{12/5+\eps})$.
We can bound $\sum \rho(\ell)^2$ using the assumption that no line contains more than $M$ points, so
\[ K^2 n^4 = O(M^{2/5}n^{12/5+\eps}\cdot M^2 |L^+|),\]
and hence
\[|L^+| = \Omega(K^2 n^{8/5 - \eps} M^{-12/5}).\]
Since $K = O(\log(n))$, it can be absorbed into the factor $n^\eps$ in the final bound.
\end{proof}

\paragraph{Remark.} Notice that the proof of Theorem \ref{th:fewdistances2} also applies when $M(n)= \Omega(n^{1/4})$. However, this would lead to a bound for the number of lines in \emph{(ii)} that is weaker than the bound that is implied by Theorem \ref{th:Szem}.


\section{Proof of Theorem \ref{th:incidencebound}}\label{sec:incidencebound}
We now present the proof of the incidence bound that we use. As mentioned in the introduction, this proof is essentially from \cite{FPSSZ14}; we reproduce it here to determine the dependence on the parameter $t$.
We prove a more general version than we need, since it seems to come at no extra cost, and may be useful elsewhere.

The proof uses the K\H ov\'ari-S\'os-Tur\'an theorem (e.g., see Bollob\'as \cite[Theorem IV.9]{Bol}),
which we formulate as a weak incidence bound.

\begin{lemma}[K\H ov\'ari-S\'os-Tur\'an]\label{le:weak}
Let $\vrts$ be a set of $n$ varieties and $\pts$ a set of $m$ points, both in $\RR^d$,
such that the incidence graph of $\pts\times\vrts$ contains no $K_{s,t}$ (where $s$ is constant but $t$ may depend on $m,n$).
Then
\[I(\pts,\vrts) = O(t^{1/s}mn^{(s-1)/s}+n).\]
\end{lemma}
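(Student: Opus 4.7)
The plan is to follow the classical double-counting proof behind the Kővári–Sós–Turán theorem, while tracking the dependence on $t$ as an explicit factor rather than absorbing it into the $O$-constant. For each variety $v \in \vrts$ write $r_v = |\pts \cap v|$, so that $I(\pts,\vrts) = \sum_{v \in \vrts} r_v$, and the goal becomes bounding this sum.

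First I would double count the pairs $(T, v)$ in which $T$ is an $s$-subset of $\pts$ and $v \in \vrts$ is a variety containing all of $T$. On one hand this count equals $\sum_v \binom{r_v}{s}$. On the other hand, the $K_{s,t}$-freeness of the incidence graph means that no $s$-element subset of $\pts$ is contained in $t$ or more varieties of $\vrts$, so the count is at most $(t-1)\binom{m}{s} = O(tm^s)$, since $s$ is constant.

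Next I would split $\vrts$ into a sparse and a dense part according to the size of $r_v$. For the sparse part $\vrts_1 = \{v : r_v < 2s\}$, the contribution to the incidence count is trivially $\sum_{v\in\vrts_1} r_v < 2sn = O(n)$, which produces the additive $+n$ term in the bound. For the dense part $\vrts_2 = \{v : r_v \geq 2s\}$, the elementary inequality $\binom{r_v}{s} \geq (r_v/2)^s / s!$ combined with the double-counting bound from the previous step gives $\sum_{v\in\vrts_2} r_v^s = O(tm^s)$. Hölder's inequality then yields
\[
\sum_{v\in\vrts_2} r_v \;\le\; |\vrts_2|^{(s-1)/s} \left(\sum_{v\in\vrts_2} r_v^s\right)^{1/s} \;\le\; n^{(s-1)/s}\cdot O\!\left(t^{1/s} m\right) \;=\; O\!\left(t^{1/s} m n^{(s-1)/s}\right).
\]
Summing the contributions from $\vrts_1$ and $\vrts_2$ gives the stated bound.

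There is no real obstacle here; the only subtle point is the sparse/dense split, which is what creates the additive $O(n)$ term. A direct application of Jensen's inequality to $\sum_v \binom{r_v}{s}$ without this split would fail because $\binom{r_v}{s}$ vanishes for $r_v < s$ and cannot be inverted to control $\sum r_v$ uniformly. Once the split is performed, the exponent $1/s$ of $t$ in the final bound arises solely from the $s$-th root taken in Hölder's inequality applied to the $(t-1)$ factor produced by the $K_{s,t}$-free hypothesis, so the dependence on $t$ is exact and no further estimates are needed.
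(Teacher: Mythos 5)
Your proof is correct. The paper does not actually prove this lemma -- it cites Bollob\'as's book for it -- so there is no internal proof to compare against; your argument is the standard K\H ov\'ari--S\'os--Tur\'an double count (bounding $\sum_v \binom{r_v}{s}$ by $(t-1)\binom{m}{s}$ via $K_{s,t}$-freeness, splitting off the varieties with $r_v<2s$ to produce the $+n$ term, and extracting $\sum_v r_v$ by H\"older), with the dependence on $t$ tracked explicitly as the statement requires. All steps check out, including the inequality $\binom{r_v}{s}\ge (r_v/2)^s/s!$ for $r_v\ge 2s$ and the exponent $t^{1/s}$ coming from the $s$-th root.
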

We amplify the weak bound of Lemma \ref{le:weak} by using \emph{polynomial partitioning}.
Given a polynomial $f \in \RR[x_1,\ldots, x_d]$,
we write $Z(f) = \{ p \in \RR^d \mid f(p)=0 \}$.
We say that $f \in \RR[x_1,\ldots, x_d]$ is an \emph{$r$-partitioning polynomial} for a finite set $\pts\subset \RR^d$ if no connected component of $\RR^d \backslash Z(f)$ contains more than $|\pts|/r$ points of $\pts$ (notice that there is no restriction on the number of points of $\pts$ that are in $Z(f)$).
Guth and Katz \cite{GK11} introduced this notion and proved that for every $\pts\subset \RR^d$ and $1\le r \le |\pts|$, there exists an $r$-partitioning polynomial of degree $O(r^{1/d})$.
In \cite{FPSSZ14}, the following generalization was proved.

\begin{theorem}[Partitioning on a variety]\label{th:partitionV}
Let $V$ be an irreducible variety in $\RR^d$ of dimension $e$ and degree $D$.
Then for every finite $\pts\subset V$ there exists an $r$-partitioning polynomial $f$ of degree $O(r^{1/e})$ such that $V\not\subset Z(f)$.
The implicit constant depends only on $d$ and $D$.
\end{theorem}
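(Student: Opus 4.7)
The plan is to mimic the Guth--Katz construction of a partitioning polynomial, but to replace the ambient polynomial space $\RR[x_1,\ldots,x_d]_{\le \Delta}$, whose dimension is $\Theta(\Delta^d)$, with its restriction to $V$, whose dimension is $\Theta(\Delta^e)$ because $V$ is irreducible of dimension $e$. This substitution should drop the degree needed to bisect $N$ point sets from $\Theta(N^{1/d})$ down to $\Theta(N^{1/e})$, and after iteration it will yield the claimed degree $O(r^{1/e})$.

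First I would establish a Hilbert-function estimate: for $V$ irreducible of dimension $e$ and degree $D$ in $\RR^d$, the image of $\RR[x_1,\ldots,x_d]_{\le \Delta}$ in the coordinate ring $\RR[V]$ has real dimension $\Theta(\Delta^e)$, with both implied constants depending only on $d$ and $D$. This follows from the Hilbert polynomial of the complex projective closure of $V$, which has the form $\tfrac{D}{e!}\Delta^e + O(\Delta^{e-1})$, combined with a small amount of real-algebraic bookkeeping. Given this, I pick $\Delta = c(d,D)\,N^{1/e}$ so that the image has dimension strictly larger than $N$, and I let $W\subset \RR[x_1,\ldots,x_d]_{\le \Delta}$ be a linear complement of the ideal piece $I(V)_{\le \Delta}$. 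Then $\dim W > N$, and every nonzero $f\in W$ satisfies $V\not\subset Z(f)$, which is exactly the non-triviality condition in the theorem.

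Next I would prove a $V$-adapted polynomial ham-sandwich lemma: for any $N$ finite subsets $\pts_1,\ldots,\pts_N$ of $V$, a standard Borsuk--Ulam argument applied to the unit sphere of $W$ (sending each unit vector $f$ to the vector of signed counts $\#\{p\in \pts_i:f(p)>0\} - \#\{p\in \pts_i:f(p)<0\}$, with a mild perturbation to break ties) produces a nonzero $f\in W$ of degree $O(N^{1/e})$ such that each of $\{f>0\}$ and $\{f<0\}$ contains at most half of each $\pts_i$. I then iterate exactly as in Guth--Katz: at stage $j$ there are at most $2^j$ subsets of $\pts$, one per sign class of the polynomials already built, and I apply the ham-sandwich lemma with $N=2^j$ to get $f_j$ of degree $O(2^{j/e})$ bisecting all of them. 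After $\lceil\log_2 r\rceil$ stages no subset contains more than $|\pts|/r$ points, and the product $f=f_0 f_1\cdots$ has total degree $\sum_j O(2^{j/e})=O(r^{1/e})$, because the geometric series in the ratio $2^{1/e}>1$ is dominated by its last term. Since no factor vanishes identically on $V$, neither does $f$.

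The main obstacle is the Hilbert-function estimate with constants \emph{uniform} in $V$ (depending only on $d$ and $D$, not on the particular variety). The leading term $\tfrac{D}{e!}\Delta^e$ from the projective Hilbert polynomial is classical, but transferring the matching lower bound to the real affine coordinate ring and doing so with uniformly chosen constants as $V$ ranges over irreducible varieties of bounded degree in $\RR^d$ takes some care; a convenient route is to use a Noether normalization for $V$ that can be selected generically within a bounded range once $d$ and $D$ are fixed. Once that estimate is in hand, the Borsuk--Ulam bisection step and the logarithmic iteration are routine adaptations of the Guth--Katz argument and do not require new ideas.
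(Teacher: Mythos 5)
The paper does not actually prove this theorem itself---it quotes it from Fox--Pach--Sheffer--Suk--Zahl \cite{FPSSZ14}---but your proposal follows essentially the same route as the proof given there: a lower bound of $\Omega(\Delta^e)$ on the dimension of the space of degree-$\le\Delta$ polynomials modulo $I(V)_{\le\Delta}$ (obtained there via a dominant generic linear projection $V\to\RR^e$, which is exactly your Noether-normalization device, and which only requires the lower bound, not the full $\Theta(\Delta^e)$), followed by the Stone--Tukey/Borsuk--Ulam discrete ham-sandwich step on the unit sphere of a complement of $I(V)_{\le\Delta}$ and the Guth--Katz dyadic iteration, with irreducibility of $V$ guaranteeing that the product of the bisecting factors does not vanish on $V$. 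Your argument is correct.
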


We are now ready to prove our incidence bound. For the convenience of the reader, we first repeat the statement of the theorem.

\paragraph{Theorem \ref{th:incidencebound}.}
\emph{Let $\vrts$ be a set of $n$ constant-degree varieties and let $\pts$ be a set of $m$ points, both in $\RR^d$, such that the incidence graph of $\pts\times\vrts$ contains no copy of $K_{s,t}$ (where $s$ is a constant, but $t$ may depend on $m,n$).
Moreover, let $\pts\subset V$, where $V$ is an irreducible constant-degree variety of dimension $e$.
Then}
\[I(\pts,\vrts) = O\left(m^{\frac{s(e-1)}{es-1}+\eps}n^{\frac{e(s-1)}{es-1}}t^{\frac{e-1}{es-1}} +tm+n\right).\]

\begin{proof}
We use induction on $e$ and $m$, with the induction claim
\begin{equation} \label{eq:incK}
I(\pts,\vrts) \le \alpha_{1,e} m^{\frac{s(e-1)}{es-1}+\eps}n^{\frac{e(s-1)}{es-1}}t^{\frac{e-1}{es-1}}+\alpha_{2,e}(tm+n).
\end{equation}
The base cases for the induction are simple.
If $m$ is sufficiently small, then \eqref{eq:incK} follows immediately by choosing sufficiently large values for $\alpha_{1,e}$ and $\alpha_{2,e}$.
Similarly, when $e=0$, we again obtain \eqref{eq:incK} when $\alpha_{1,e}$ and $\alpha_{2,e}$ are sufficiently large (with respect to $d$ and to the degree of $V$).

The constants $d,e,D,s,1/\eps$ are given and thus fixed.
The other constants are to be chosen, and the dependencies between them are
\[ C_\text{weak}, \cp,\inter
\ll \cells
\ll \chol
\ll r
\ll \comps,\alpha_{1,e-1},\alpha_{2,e-1}
\ll \alpha_{2,e}
\ll \alpha_{1,e},\]
where $C\ll C'$ means that $C'$ is to be chosen sufficiently large compared to $C$; that is, $C$ should be chosen before $C'$.

By Lemma \ref{le:weak}, there exists a constant $C_\text{weak}$ (depending on $d,s$) such that
\[I(\pts,\vrts)\le C_\text{weak}\left(mn^{1-1/s}t^{1/s}+n\right).\]
 When $m\le (n/t)^{1/s}$, and $\alpha_{2,e}$ is sufficiently large, we have $I(\pts,\vrts) \le \alpha_{2,e} n$.
Therefore, in the remainder of the proof we can assume that $n < m^st$, which implies
\begin{equation} \label{eq:nElim}
n = n^{\frac{d-1}{ds-1}} n^{\frac{d(s-1)}{ds-1}} \le m^{\frac{s(d-1)}{ds-1}} n^{\frac{d(s-1)}{ds-1}}t^{\frac{(d-1)}{ds-1}}.
\end{equation}

\paragraph{Partitioning.}
By Theorem \ref{th:partitionV}, there exists an $r$-partitioning polynomial $f$ with respect to $V$ of degree at most $\cp \cdot r^{1/e}$, for a constant $\cp$.
Denote the cells of $V\backslash Z(f)$ as $\Omega_1, \ldots, \Omega_N$. Since we are working over the reals, there exists a constant-degree polynomial $g$ such that $Z(g)=V$. Then, by \cite[Theorem A.2]{ST11}, we have
$N\le \cells \cdot \deg(f)^{\dim V}=\cells \cdot r$ for some constant $\cells$ depending on $\cp$.

We partition $I(\pts,\vrts)$ into the following three subsets:

\begin{itemize}
\item $I_1$ consists of the incidences $(p,S) \in \pts\times\vrts$ such that $p\in V \cap Z(f)$,
and some irreducible component of $V \cap Z(f)$ contains $p$ and is fully contained in $S$.
\item $I_2$ consists of the incidences $(p,S) \in \pts\times\vrts$ such that $p\in V \cap Z(f)$,
and every irreducible component of $V\cap Z(f)$ that contains $p$ is not contained in $S$.
\item $I_3 = I(\pts,\vrts) \backslash (I_1\cup I_2)$, the set of incidences $(p,S) \in \pts\times\vrts$ such that $p$ is not contained in $V \cap Z(f)$.
\end{itemize}

\noindent Note that we indeed have $I(\pts,\vrts) = I_1 + I_2+I_3$.

\paragraph{Bounding $I_1$.}
The points of $\pts\subset \RR^d$ that participate in incidences of $I_1$ are all contained in the variety $V_0 =V \cap Z(f)$. Set $\pts_0=\pts \cap V_0$ and $m_0 = |\pts_0|$.
Since $V$ is an irreducible variety and $V\not\subset Z(f)$, $V_0$ is a variety of dimension at most $e-1$ and of degree that depends on $r$.
By \cite[Lemma 4.3]{ST11}, the intersection $V_0$ is a union of $\comps$ irreducible components, where $\comps$ is a constant depending on $r$ and $d$.\footnote{This lemma only applies to complex varieties. However, we can take the \emph{complexification} of the real variety and apply the lemma to it (for the definition of a complexification, e.g., see \cite[Section 10]{Whit57}). The number of irreducible components of the complexification cannot be smaller than number of irreducible components of the real variety (e.g., see \cite[Lemma 7]{Whit57}).} The degrees of these components also depend only on these values (for a proper definition of degrees and further discussion, e.g., see \cite{FPSSZ14}).

Consider an irreducible component $W$ of $V_0$. If $W$ contains at most $s-1$ points of $\pts_0$, it yields at most $(s-1)n$ incidences. Otherwise, since the incidence graph contains no $K_{s,t}$, there are at most $t-1$ varieties of $\vrts$ that fully contain $W$, yielding at most $(t-1)m_0$ incidences. By summing up, choosing sufficiently large $\alpha_{1,e},\alpha_{2,e}$, and applying \eqref{eq:nElim}, we have
\begin{equation} \label{eq:I_1}
I_1 \le \comps \left( sn + tm_0 \right)
< \frac{\alpha_{2,e}}{2}(n+tm_0)
< \frac{\alpha_{1,e}}{4}m^{\frac{s(e-1)}{es-1}} n^{\frac{e(s-1)}{es-1}}t^{\frac{(e-1)}{es-1}} + \frac{\alpha_{2,e}}{2}tm_0.
\end{equation}

\paragraph{Bounding $I_2$.}
The points that participate in $I_2$ lie in $V_0=V\cap Z(f)$,
and the varieties that participate do not contain any component of $V_0$.
Because $V_0$ has dimension at most $e-1$, and the participating varieties do not contain any component of $V_0$, we can apply the induction claim on each irreducible component of $V_0$.
Since $V_0$ has $\comps$ irreducible components, we get
\begin{equation*}
I_2 \le \comps \alpha_{1,e-1} m_0^{\frac{s(e-2)}{(e-1)s-1}+\eps}n^{\frac{(e-1)(s-1)}{(e-1)s-1}}t^{\frac{e-2}{(e-1)s-1}}+ \alpha_{2,e-1}(tm_0+n),
\end{equation*}
with $\alpha_{1,e-1}$ and $\alpha_{2,e-1}$ depending on the degree of the irreducible component of $V_0$, which in turn depends on $r$.
The analysis that leads to \eqref{eq:nElim} also yields the following bound.
\[ m^{\frac{s(e-2)}{(e-1)s-1}+\eps}n^{\frac{(e-1)(s-1)}{(e-1)s-1}}t^{\frac{e-2}{(e-1)s-1}} \le m^{\frac{s(e-1)}{es-1}+\eps}n^{\frac{e(s-1)}{es-1}}t^{\frac{e-1}{es-1}}.\]
By applying \eqref{eq:nElim} to remove the term $\alpha_{2,e-1} n$, and by choosing $\alpha_{1,e}$ and $\alpha_{2,e}$ sufficiently large with respect to $\comps,\alpha_{1,e-1},\alpha_{2,e-1}$,  we obtain
\begin{equation}\label{eq:I_2}
I_2 \le \frac{\alpha_{1,e}}{4} m^{\frac{s(e-1)}{es-1}+\eps}n^{\frac{e(s-1)}{es-1}}t^{\frac{e-1}{es-1}}+\frac{\alpha_{2,e}}{2}tm_0.
\end{equation}

\paragraph{Bounding $I_3$.}
For every $1 \le i \le N$, we set $\pts_i = \pts\cap \Omega_i$
and denote by $\vrts_i$ the set of varieties of $\vrts$ that intersect the cell $\Omega_i$ but do not contain it. We also set $m_i=|P_i|$ and $n_i=|\vrts_i|$. Since $f$ is an $r$-partitioning polynomial, we have $m_i \le m/r$, .

We have $\sum_{i=1}^N m_i = m-m_0$.
By \cite[Theorem A.2]{ST11}, there exists a constant $\inter$ such that the following holds for every $\vrt\in \vrts$.
The subvariety $\vrt\cap V$ of $V$, which must have dimension at most $e-1$, intersects at most $\inter \cdot\deg(f)^{\dim(S\cap V)} = \inter \cdot r^{(e-1)/e}$ cells.
This implies that
\begin{equation*}
\sum_{i=1}^N n_i \le \inter \cdot r^{(e-1)/e}\cdot n.
\end{equation*}
By H\"older's inequality we have
\begin{align*}
\sum_{i=1}^N n_i^{\frac{e(s-1)}{es-1}} &
\le \left(\sum_{i=1}^N n_i\right)^{\frac{e(s-1)}{es-1}} \left(\sum_{i=1}^N 1\right)^{\frac{e-1}{es-1}} \\
&\le \left(\inter  r^{(e-1)/e} n\right)^{\frac{e(s-1)}{es-1}}\left(\cells  r\right)^{\frac{e-1}{es-1}} \\
&\le \chol r^{\frac{(e-1)s}{es-1}} n^{\frac{e(s-1)}{es-1}},
\end{align*}
where $\chol$ depends on $\inter,\cells$.
Using the induction hypothesis, we obtain
\begin{align*}
\sum_{i=1}^N I(\pts_i,\vrts_i) &\le \sum_{i=1}^N \left(\alpha_{1,e} m_i^{\frac{(e-1)s}{es-1}+\varepsilon}n_i^{\frac{e(s-1)}{es-1}}t^{\frac{(e-1)}{es-1}}+\alpha_{2,e}(tm_i+n_i)\right) \\
&\le \alpha_{1,e} \frac{m^{\frac{(e-1)s}{es-1}+\varepsilon}t^{\frac{(e-1)}{es-1}}}{r^{\frac{(e-1)s}{es-1}+\varepsilon}} \sum_{i=1}^N n_i^{\frac{e(s-1)}{es-1}} + \sum_{i=1}^N\alpha_{2,e}(tm_i+n_i) \\[2mm]
&\le \alpha_{1,e} \chol \frac{m^{\frac{(e-1)s}{es-1}+\varepsilon}n^{\frac{e(s-1)}{es-1}}t^{\frac{(e-1)}{es-1}}}{r^{\varepsilon}}   + \alpha_{2,e}\left(t(m-m_0)+ \inter r^{\frac{e-1}{e}}n\right).
\end{align*}
By choosing $\alpha_{1,e}$ sufficiently large with respect to $\inter,r,\alpha_{2,e}$, and using \eqref{eq:nElim}, we get
\[ \sum_{i=1}^N I(\pts_i,\vrts_i) \le 2\alpha_{1,e} \chol \frac{m^{\frac{(e-1)s}{es-1}+\varepsilon}n^{\frac{e(s-1)}{es-1}}s^{\frac{(e-1)}{es-1}}}{r^{\varepsilon}} + \alpha_{2,e}t(m-m_0). \]
Finally, choosing $r$ sufficiently large with respect to $\chol$ gives
\begin{equation}\label{eq:I_3}
I_3 = \sum_{i=1}^N I(\pts_i,\vrts_i) \le \frac{\alpha_{1,e}}{2} m^{\frac{(e-1)s}{es-1}+\varepsilon}n^{\frac{e(s-1)}{es-1}}t^{\frac{(e-1)}{es-1}} + \alpha_{2,e}t(m-m_0).
\end{equation}

\paragraph{Summing up.} By combining $I(\pts,\vrts) = I_1 + I_2+I_3$ with \eqref{eq:I_1}, \eqref{eq:I_2}, and \eqref{eq:I_3}, we obtain
\[ I(\pts,\vrts) \le \alpha_{1,e} m^{\frac{s(e-1)}{es-1}+\eps}n^{\frac{e(s-1)}{es-1}}t^{\frac{(e-1)}{es-1}}+\alpha_{2,e}(tm+n), \]
which completes the induction step and the proof of the theorem.
\end{proof}







\end{document}